\newtheorem{theorem}{Theorem}
\newtheorem{lemma}{Lemma}
\newtheorem{remark}{Remark}
\newtheorem*{lemma*}{Lemma}
\newcommand{\cA}{\mathcal{A}}
\newcommand{\cV}{\mathcal{V}}
\newcommand{\cW}{\mathcal{W}}
\newcommand{\KKT}{\text{KKT}}
\renewcommand{\S}{\mathbf{S}}
\newcommand{\psd}{\succeq}
\newcommand{\NN}{\mathbb{N}}
\newcommand{\RR}{\mathbb{R}}
\newcommand{\CC}{\mathbb{C}}
\newcommand{\QQ}{\mathbb{Q}}
\DeclareMathOperator{\rankpsd}{rank_{psd}}
\DeclareMathOperator{\conv}{conv}
\DeclareMathOperator{\cl}{\textbf{cl}}
\DeclareMathOperator{\interior}{\textbf{int}}
\DeclareMathOperator{\rank}{rank}
\DeclareMathOperator{\codim}{codim}
\DeclareMathOperator{\elim}{elim}
\DeclareMathOperator*{\argmax}{argmax}
\newcommand{\Vr}{\cV_r}
\title{A lower bound on the positive semidefinite rank of convex bodies}
\author{Hamza Fawzi\footnote{Department of Applied Mathematics and Theoretical Physics, University of Cambridge, UK.} 
\and Mohab {Safey El Din}\footnote{Sorbonne Universit\'es, UPMC Univ Paris 06, CNRS, INRIA Paris Center, 
LIP6, Equipe PolSys, F-75005, Paris, France.}}
\def\modif#1{{{#1}}}
\begin{document}

\maketitle

\begin{abstract}
The positive semidefinite rank of a convex body $C$ is the size of its smallest positive semidefinite formulation. We show that the positive semidefinite rank of any convex body $C$ is at least $\sqrt{\log d}$ where $d$ is the smallest degree of a polynomial that vanishes on the boundary of the polar of $C$. This improves on the existing bound which relies on results from quantifier elimination. Our proof relies on the B\'ezout bound applied to the Karush-Kuhn-Tucker conditions of optimality. We discuss the connection with the algebraic degree of semidefinite programming and show that the bound is tight (up to constant factor) for random spectrahedra of suitable dimension.
\end{abstract}

\section{Introduction}

Semidefinite programming is the problem of optimizing a linear function over a convex set described by a linear matrix inequality:
\[
\max \quad c^T x \quad \text{ s.t. } \quad x \in S
\]
where $S \subseteq \RR^n$ has the form:
\begin{equation}
\label{eq:spec}
S = \{ x \in \RR^n : A_0 + x_1 A_1 + \dots + x_n A_n \succeq 0\}.
\end{equation}
Here $A_0,\dots,A_n$ are real symmetric matrices of size $m\times m$ and the notation $M \succeq 0$ indicates that $M$ is positive semidefinite. A set of the form \eqref{eq:spec} is called a \emph{spectrahedron}.

Given a convex set $C \subseteq \RR^k$, we say that $C$ has a \emph{semidefinite lift} of size $m$ if it can be expressed as
\[
C = \pi(S)
\]
where $S$ is a spectrahedron \eqref{eq:spec} defined using matrices of size $m\times m$ and $\pi$ is any linear map. If $C$ can be expressed in this way, then any linear optimization problem over $C$ can be expressed as a semidefinite program of size $m$. The size of the smallest semidefinite lift of $C$ is called the \emph{positive semidefinite rank} of $C$ \cite{gouveia2011lifts,psdranksurvey}.

The purpose of this paper is to give a general lower bound on the
positive semidefinite rank of convex bodies. \modif{Here, by a convex
  body we mean a closed convex set such that the origin lies in the
  interior of $C$.} For the statement of our main theorem, we need the
notion of \emph{polar} of a convex body $C$, defined as follows:
\[
C^o = \left\{ c \in \RR^k : \langle c,x \rangle \leq 1 \; \forall x \in C\right\}.
\]
The polar of a convex body is another full-dimensional closed convex
set that is bounded and contains the origin \cite[Theorem
14.6]{rockafellar1997convex}. Throughout the paper, we use $\log$ for
the logarithm base 2. \modif{We can now state the first main result of
  this article.}
\begin{theorem}
\label{thm:mainlbpsdrank}
Let $C$ be a convex body and let $C^o$ be its polar. Let $d$ be the smallest degree of a polynomial with real coefficients that vanishes on the boundary of $C^o$. Then $\rankpsd(C) \geq \sqrt{\log d}$.
\end{theorem}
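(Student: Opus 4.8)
The plan is to show that a semidefinite lift of $C$ of size $m$ forces $\boundary C^o$ to lie on a complex hypersurface of degree at most $2^{m^2}$; since $d$ is by definition the least degree of a nonzero polynomial vanishing on $\boundary C^o$, this yields $d\le 2^{m^2}$, i.e.\ $\rankpsd(C)=m\ge\sqrt{\log d}$. So fix a lift $C=\pi(S)$ with $S=\{y\in\RR^n:A(y):=A_0+\sum_{i=1}^n y_iA_i\succeq 0\}$, $A_i\in\S^m$ and $\pi:\RR^n\to\RR^k$ linear. The boundary of $C^o$ is the level set $\{c:h_C(c)=1\}$ of the support function
\[
h_C(c)=\max_{x\in C}\langle c,x\rangle=\max\{\langle \pi^* c,y\rangle:A(y)\succeq 0\},
\]
the optimal value of a semidefinite program in $y$ whose data depends linearly on $c$.

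I would then write the Karush--Kuhn--Tucker conditions of this SDP using a dual symmetric matrix variable $Z$: by conic duality, for every $c$ in a dense subset of $\boundary C^o$ there exist $y\in\RR^n$ and $Z\in\psdcone{m}$ with
\[
\langle A_i,Z\rangle=-(\pi^* c)_i\ \ (1\le i\le n),\qquad A(y)\,Z=0,\qquad h_C(c)=\langle A_0,Z\rangle,
\]
the middle identity being complementary slackness ($A(y),Z\succeq 0$ with zero inner product). Passing to $\CC$, let $\mathcal V$ be the affine variety in the variables $(y,Z,c,t)$ defined by $\langle A_i,Z\rangle+(\pi^*c)_i=0$, $A(y)Z=0$ and $t-\langle A_0,Z\rangle=0$, and let $W\subseteq\CC^{k+1}$ be the Zariski closure of the image of $\mathcal V$ under $(y,Z,c,t)\mapsto(c,t)$. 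By the above, $W$ contains $(c,h_C(c))$ for a dense set of $c\in\boundary C^o$, hence it contains the Zariski closure $W_0$ of the graph of $h_C$ near a non-degenerate critical point; since near such a point the KKT system has $W_0$ as its (smooth, $k$-dimensional) local image, $W_0$ is an irreducible component of $W$, so in particular $W\ne\CC^{k+1}$.

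The degree estimate is the easy part. The defining equations of $\mathcal V$ consist of $n+1$ linear equations together with the $m^2$ entries of $A(y)Z$, each bilinear in $y$ and $Z$ and hence of degree $2$; by the B\'ezout inequality $\deg\mathcal V\le 1^{\,n+1}\cdot 2^{m^2}=2^{m^2}$, and degree does not increase under linear projection, so $\deg W\le 2^{m^2}$. As $W$ is a proper subvariety of $\CC^{k+1}$, it lies on a hypersurface of degree at most $\deg W\le 2^{m^2}$; let $g(c,t)$ be a nonzero polynomial of that degree vanishing on $W$, and write $g=(t-1)^a g'$ with $t-1\nmid g'$. Since $h_C$ is positively homogeneous, $W_0\not\subseteq\{t=1\}$, so $g'$ still vanishes on the irreducible set $W_0$; therefore $q(c):=g'(c,1)$ is a nonzero polynomial of degree at most $2^{m^2}$ that vanishes on $\boundary C^o$. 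Hence $d\le 2^{m^2}$ and $\rankpsd(C)\ge\sqrt{\log d}$.

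The main obstacle is the soft input of the second paragraph: one has to check that strong duality together with attainment of both the primal and the dual optima, and the existence of a non-degenerate KKT point, hold on a dense subset of $\boundary C^o$ (possibly after a facial-reduction normalization of the lift so that Slater's condition is available). I would handle this with standard SDP duality theory and the genericity of uniqueness of supporting hyperplanes along the boundary of a convex body, or by perturbing $c$ and passing to the limit inside the Zariski closure; everything else---setting up the KKT variety, counting degrees, and the elimination step---is routine.
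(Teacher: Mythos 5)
Your overall strategy is the same as the paper's: write the KKT conditions for the SDP underlying the lift, discard the sign constraints to get an affine variety, bound its degree by B\'ezout, project onto the dual variable, and conclude that $\partial C^o$ lies on a hypersurface of degree at most $2^{m^2}$. There are, however, two places where your realization of this plan has real gaps.

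First, the specialization $t=1$ after factoring out $(t-1)^a$ is not fully justified. You show that $g'$ vanishes on $W_0$, but $W_0$ is only the Zariski closure of the graph of $h_C$ \emph{near one} non-degenerate critical point, so $W_0\cap\{t=1\}$ need not contain all of $\partial C^o$ (for a polytope, for instance, the Zariski closure of the graph of $h_C$ is a union of hyperplanes and $W_0$ would be just one of them). What the argument actually needs is that \emph{no irreducible component} of $W$ is contained in $\{t=1\}$, so that $g'=g/(t-1)^a$ still vanishes on all of $W\supseteq\{(c,1):c\in\partial C^o\}$. This is true because the graph of $h_C$ is a real cone (by positive homogeneity), hence its Zariski closure has a homogeneous vanishing ideal and is itself a cone, so none of its components can sit inside the affine hyperplane $t=1$. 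You gesture at positive homogeneity but do not close this loop. The paper avoids the issue entirely by baking the normalization $c^Tx=1$ into the KKT system, projecting only onto $c$, and then counting intersections of the projected variety with a generic line through the origin, using a scaling trick $(x,X,Z)\mapsto(x,X,(c_0^Tx)Z)$ to map each intersection point to a distinct KKT solution of a \emph{fixed} problem.

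Second, the ``soft input'' you flag -- that strong duality, primal and dual attainment, and a non-degenerate KKT point hold on a dense subset of $\partial C^o$ -- is a genuine obstacle for a non-generic pencil $(A_0,\dots,A_n)$, and the fixes you sketch (facial reduction, perturbing $c$) are not worked out and are not obviously easier than what the paper does. The paper's resolution is a perturbation of the \emph{data} $A$, not of $c$: approximate $A$ by generic pencils $A_k$ (keeping $A_{0,k}\succ 0$ so the spectrahedra still contain $0$ in the interior), obtain for each $k$ a polynomial $p_k$ of degree $\leq 2^{m^2}$ vanishing on $\partial S_k^o$, normalize, and extract a convergent subsequence whose limit vanishes on $\partial S^o$. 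This limit argument is what makes Lemma~\ref{lemma:finitenessKKT} (which is only a generic statement) usable for arbitrary $C$. In your version you would also need an analogue of this perturbation step, because for non-generic $A$ the variety $\mathcal V$ can fail to have the right dimension, in which case $W$ might not be proper and the B\'ezout bound on its degree gives no information about $\partial C^o$.

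A minor structural difference worth noting: you run the KKT argument directly for the shadow $C=\pi(S)$ (objective $\langle\pi^*c,y\rangle$), whereas the paper first proves the degree bound for the polar of the spectrahedron $S$ itself and then transports it to $C^o$ via $q=p\circ\pi^*$. Both work; the paper's split keeps the perturbation step confined to the spectrahedral case, which is slightly cleaner.
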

We also show that this bound is tight in general (up to multiplicative factors):
\begin{theorem}
\label{thm:tightness}
There exist convex bodies $C$ where the degree $d$ of the algebraic boundary of $C^o$ can be made arbitrary large and where $\rankpsd(C) \leq \sqrt{20 \log d}$. 
\end{theorem}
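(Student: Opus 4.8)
The plan is to realize $C$ as a \emph{generic} bounded spectrahedron of matrix size about $m$, and to show that the algebraic boundary of the polar $C^o$ has degree exponential in $m^2$ by identifying that degree with the \emph{algebraic degree of semidefinite programming} studied by Nie, Ranestad and Sturmfels and asymptotically estimated by Graf von Bothmer and Ranestad. Concretely, set $N=\binom{m+1}{2}$, fix a small $\varepsilon>0$, and let $\mathcal L\subseteq\S^m$ be a generic affine subspace of dimension $n$ through $\varepsilon I$, parametrized affinely as $x\mapsto M(x)$, $x\in\RR^n$. Take $C=\{x\in\RR^n: M(x)\psd 0,\ \tr M(x)\le 1\}$. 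Any recession direction $D$ of this set satisfies $D\psd 0$ and $\tr D\le 0$, hence $D=0$, so $C$ is bounded; also $0\in\interior C$ because $\varepsilon I$ is interior to $\{X\psd 0,\ \tr X\le 1\}$. Thus $C$ is a convex body and $C^o$ is full-dimensional. Appending the scalar block $1-\tr M(x)$ exhibits $C$ as a spectrahedron of size $m+1$, so $\rankpsd(C)\le m+1$. The role of the extra affine inequality is that $n$ may now be taken as large as we please; we choose $n\approx\frac{3}{4}N$, which forces the maximum of a generic linear functional over $C$ to be attained at a matrix of a fixed corank $r_0$ of order $m/2$. Everything then reduces to bounding $d$, the least degree of a polynomial vanishing on $\boundary C^o$, from below.

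Here $\boundary C^o=\{c: h_C(c)=1\}$ with $h_C(c)=\max\{\langle c,x\rangle: x\in C\}$ the optimal value of a semidefinite program with generic data. On the dense open subset of $\boundary C^o$ where the maximizer has corank exactly $r_0$, the function $h_C$ is an algebraic function of $c$, and the Karush--Kuhn--Tucker / complementarity equations identify its graph with a hypersurface whose degree in the variable $c$ equals the algebraic degree $\delta$ of the program in the sense of Nie--Ranestad--Sturmfels; equivalently, by biduality this hypersurface is the projective dual variety of the corank-$r_0$ stratum of the symmetric determinantal hypersurface $\{\det M(x)=0\}$. Since an open, full-dimensional region of the real boundary $\boundary C^o$ lies on a single irreducible component of this hypersurface, the Zariski closure of $\boundary C^o$ contains that component, and therefore $d\ge\delta$.

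It remains to insert the enumerative formulas for $\delta$ and their asymptotics: for corank $r_0$ a suitable constant fraction of $m$ one has $\delta=2^{\Theta(m^2)}$, with an explicit constant in the exponent exceeding $1/20$, so $d\ge 2^{(m+1)^2/20}$ for all large $m$. This yields $\rankpsd(C)\le m+1\le\sqrt{20\log d}$, and letting $m\to\infty$ makes $d$ arbitrarily large, which is the claim. The main obstacle is precisely this last input --- a clean \emph{lower} bound of the shape $2^{m^2/20}$ on the algebraic degree of semidefinite programming. The KKT system is highly structured and is far from a generic system of polynomials of its multidegrees, so B\'ezout only supplies a matching \emph{upper} bound; one genuinely needs the exact count of complex critical points together with its asymptotic growth, and one must tune the ratio $r_0/m$ so that the resulting exponent clears $1/20$. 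A secondary technical point, needed to make the reduction in the second paragraph rigorous, is to verify that the high-degree hypersurface really occurs as an irreducible component of the algebraic boundary of $C^o$, rather than merely containing it --- this relies on the irreducibility statements in the Nie--Ranestad--Sturmfels analysis and on the Zariski density of the real points supplied by an open piece of $\boundary C^o$.
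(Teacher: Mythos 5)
Your strategy is essentially the paper's --- generic spectrahedra with $n$ a constant fraction of $t_m$, identify the degree of $\partial_a C^o$ with the algebraic degree of semidefinite programming via the rank-constrained KKT system (equivalently the projective dual of a corank stratum of the determinantal hypersurface), and invoke the Nie--Ranestad--Sturmfels and von Bothmer--Ranestad formulas. But the two steps you defer at the end are precisely where the proof lives, and neither is a matter of citing known asymptotics. The explicit bound $\delta(n,m,r)\ge 2^{m^2/20}$ does not appear in the prior literature; the paper proves it from scratch (Lemma~\ref{lem:algdegformula}, Appendix~\ref{app:algdegformula}) for the specific choice $n=t_{m/2}+1$, $r=m/2+1$, by reducing to the Harris--Tu product $\psi_{[m-r+1,m]}=\prod_{0\le i\le j\le m-r-1}\tfrac{r+i+j+1}{i+j+1}$ and bounding each factor below. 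Your instruction to ``tune $r_0/m$ so the exponent clears $1/20$'' is exactly the analysis that must be supplied, not a known fact to be quoted.

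The larger gap is the claim that choosing $n$ ``forces the maximum of a generic linear functional over $C$ to be attained at a matrix of a fixed corank $r_0$.'' Pataki's bounds only cut out an \emph{interval} of admissible ranks, and as $c$ varies over $\partial C^o$ the optimal rank can take several values in that interval, each on its own semialgebraic piece. To deduce $d\ge\delta$ you need the corank-$r_0$ piece of $\partial C^o$ to be $(n-1)$-dimensional, so that its Zariski closure --- which is of codimension one and, after a Bertini argument (Lemma~\ref{lem:irredKKT}, requiring the slightly stronger condition $n>t_{m-r}$), irreducible over $\CC$ --- coincides with $\cV_r(A)$ and contributes its full degree to $\partial_a C^o$. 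That this piece is nonempty with nonempty interior in $\partial C^o$ is a theorem, not an observation: the paper gets it from Amelunxen and B\"urgisser's result \cite{amelunxen2015intrinsic} that every rank in the Pataki range occurs with strictly positive probability over Gaussian data (Lemma~\ref{lem:amelunxen}). Without that input, ``the dense open subset of $\partial C^o$ where the maximizer has corank exactly $r_0$'' could a priori be empty. (Your added trace constraint to make $C$ bounded is harmless but unnecessary under the paper's definition of convex body.)
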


When $C$ is a polytope, the degree $d$ of the algebraic boundary of $C^o$ is nothing but the number of vertices of $C$. Theorem \ref{thm:mainlbpsdrank} can thus be compared to the well-known lower bound of Goemans \cite{goemans2009smallest} on the size of linear programming lifts. The \emph{linear programming extension complexity} of a polytope $P$ is the smallest $f$ such that $P$ can be written as the linear projection of a polytope with $f$ facets.
\begin{theorem}[{Goemans \cite[Theorem 1]{goemans2009smallest}}]
\label{thm:goemans}
Assume $P$ is a polytope with $d$ vertices. Then the linear programming extension complexity of $P$ is at least $\log d$.
\end{theorem}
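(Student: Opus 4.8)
The plan is to argue by a counting comparison between the vertices of $P$ and the faces of an extension. Concretely, suppose $P = \pi(Q)$ where $Q$ is a polyhedron with $f$ facets and $\pi$ is an affine map; I will show $d \leq 2^f$, which yields $f \geq \log d$ since $\log$ is base $2$.

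The first ingredient is the elementary bound that a polyhedron $Q$ with facets $F_1,\dots,F_f$ has at most $2^f$ nonempty faces. Indeed, every nonempty face of $Q$ equals the intersection of the facets that contain it (with the convention that the empty intersection is $Q$), so the map sending a nonempty face of $Q$ to the subset of $\{1,\dots,f\}$ indexing the facets containing it is injective; hence there are at most $2^f$ of them.

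The second ingredient relates vertices of $P$ to faces of $Q$. Let $v$ be a vertex of $P$. Since $P$ is a polytope, $v$ is an exposed point, so there is a linear functional $c$ with $\langle c, v\rangle > \langle c, x\rangle$ for every $x \in P \setminus \{v\}$. Then $x \mapsto \langle c, \pi(x)\rangle$ is an affine functional on $Q$; it is bounded above (its range is $\langle c, P\rangle$, which is bounded because $P$ is a polytope), it attains its maximum value $\langle c, v\rangle$, and its set of maximizers is exactly $Q_v := \pi^{-1}(v) \cap Q$. Hence $Q_v$ is a nonempty face of $Q$, and it is nonempty because $v \in P = \pi(Q)$. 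Finally, distinct vertices $v \neq v'$ of $P$ satisfy $Q_v \cap Q_{v'} = \emptyset$, so the faces $\{Q_v\}$, as $v$ ranges over the vertices of $P$, are pairwise distinct. Combining the two ingredients, $d = \#\{\text{vertices of }P\} \leq \#\{\text{nonempty faces of }Q\} \leq 2^f$, which is the claim.

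The points requiring the most care are: (i) that a vertex of a polytope is an exposed point, which is where the hypothesis that $P$ is a polytope (and not a general convex body) enters; and (ii) handling an extension $Q$ that is an unbounded polyhedron, where one must verify that $\langle c, \pi(\cdot)\rangle$ is still bounded above on $Q$ so that its argmax is a genuine nonempty face — this holds precisely because $\pi(Q)=P$ is bounded. Neither is a serious obstacle; the only conceptual content is the observation that a linear (or affine) image cannot create more vertices than the extension has faces, together with the parametrization of the faces of $Q$ by subsets of its facets.
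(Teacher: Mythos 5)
Your proof is correct and follows essentially the same route as the paper's one-line argument: the preimage of each vertex of $P$ is a nonempty face of $Q$, distinct vertices give distinct faces, and $Q$ has at most $2^f$ faces. You simply fill in the standard details (why the preimage is a face via an exposing functional, why faces are parametrized injectively by facet-subsets, and the easy adaptation to unbounded polyhedra) that the paper leaves to the reader.
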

\begin{proof}
The proof is elementary so we include it for completeness. Assume $P = \pi(Q)$ where $Q$ is a polytope with $f$ facets. The pre-image by $\pi$ of any vertex of $P$ is a face of $Q$. Since $Q$ has at most $2^f$ faces it follows that $f \geq \log d$.
\end{proof}

For functions $f,g:\NN\rightarrow \RR$ we say that
$f(n) \in \Omega(g(n))$ if there exists a constant $K > 0$ such that
$f(n) \geq K\cdot g(n)$ for all large enough $n$.

The only previous lower bound on the positive semidefinite rank that
applies to arbitrary convex bodies that we are aware of is the bound
proved in \cite[Proposition 6]{gouveia2011lifts} which says
that\footnote{In the bound shown in \cite[Proposition
  6]{gouveia2011lifts}, $d$ is the degree of the algebraic boundary of
  $C$. However since $\rankpsd(C) = \rankpsd(C^o)$ it can also be
  taken to be that of $C^o$ in the statement of the lower bound.}
$\rankpsd(C) \geq \Omega\left(\sqrt{\frac{\log d}{n \log \log
      (d/n)}}\right)$ where $n$ is the dimension of $C$. This bound
was obtained using results from quantifier elimination theory and one
drawback is that it involves constants that are unknown or difficult
to estimate. Our lower bound of Theorem \ref{thm:mainlbpsdrank}
improves on this existing bound and also has the advantage of being
explicit.

\paragraph{Main ideas.} The main idea behind the proof of Theorem \ref{thm:mainlbpsdrank} is simple. Given a convex body $C$, we exhibit a system of polynomial equations that vanishes on the boundary of $C^o$. This system of polynomial equations is nothing but the Karush-Kuhn-Tucker (KKT) system, after discarding the inequality constraints to get an algebraic variety. Applying the B\'ezout theorem on the KKT system gives us an upper bound on the degree of this variety and yields the stated lower bound. To prove Theorem \ref{thm:tightness} about tightness of the bound we appeal to existing works \cite{nie2010algebraic} where the degree of the KKT system was explicitly computed, under certain genericity assumptions.
The convex bodies of Theorem \ref{thm:tightness} are in fact random spectrahedra (i.e., spectrahedra defined using random matrices $A_0,\dots,A_n$) of appropriate dimension, where the formulas for the algebraic degree of semidefinite programming \cite{von2009general} allow us to lower bound the degree of the algebraic boundary of their polars.
We would like to point out that many of the ideas involved in the proofs of Theorems \ref{thm:mainlbpsdrank} and \ref{thm:tightness} appear in some form or another in  \cite{rostalskidualitiescvxalg,nie2010algebraic,sinn2015generic}. For example a study of the algebraic boundary of polars of spectrahedra appears in \cite[Section 5.5]{rostalskidualitiescvxalg}. However it seems that the connection with the positive semidefinite rank was not made explicit before. The focus in these previous works seemed to be on getting \emph{exact} values for the degrees, at the price of genericity assumptions. In the present work our aim was on getting bounds (tight up to constant factors) but valid without any genericity assumption.

\paragraph{Notations.} The (topological) boundary of a set $C \subseteq \RR^n$ is denoted $\partial C$ and defined as $\partial C = \cl(C) \setminus \interior(C)$ where $\cl$ and $\interior$ denote closure and interior respectively. The algebraic boundary of $C \subseteq \RR^n$ denoted $\partial_a C$ is the smallest affine algebraic variety in $\CC^n$ that contains $\partial C$. 
We denote by $\S^m$ the space of $m\times m$ real symmetric matrices. This is a real vector space of dimension
\[
t_m := \binom{m+1}{2}.
\]
We also denote by $\S^m(\CC)$ the space of $m\times m$ symmetric matrices with complex entries.

\section{Proof of Theorem \ref{thm:mainlbpsdrank}}
\label{sec:proof}

In this section we prove Theorem \ref{thm:mainlbpsdrank}. To do so we will exhibit polynomial equations that vanish on the boundary of polars of spectrahedra and their shadows. These equations are nothing but the KKT conditions of optimality. Applying the B\'ezout bound will yield Theorem \ref{thm:mainlbpsdrank}.

\paragraph{KKT equations.} Let $A_0,\dots,A_n \in \S^m$ and define
\[
\cA(x) := x_1 A_1 + \dots + x_n A_n.
\]
Consider the linear optimization problem
\begin{equation}
\label{eq:optcS}
\max \quad c^T x \quad \text{ s.t. } \quad A_0 + \cA(x) \succeq 0
\end{equation}
and assume that the feasible set
\[
S = \{ x \in \RR^n : A_0 + \cA(x) \succeq 0\}
\]
contains 0 in its interior. In this case we know that any optimal point $x$ of \eqref{eq:optcS} must satisfy the following KKT conditions:
\begin{equation}
\label{eq:KKT}
\exists X, Z \in \S^m \quad : \quad X = A_0 + \cA(x), \quad \cA^*(Z) + c = 0, \quad XZ = 0, \quad X \succeq 0, \quad Z \succeq 0
\end{equation}
\modif{where the variable $Z$ plays the role of \emph{dual multiplier}
  and
  $\cA^*(Z) = ({\sf Trace}(A_1\ Z), \ldots, {\sf Trace}(A_n\ Z))$}.
Conditions \eqref{eq:KKT} consist of equality conditions as well as
inequality conditions. If we disregard the inequality conditions we
get a system of polynomial equations in
$(x,X,Z) \in \RR^n \times \S^m \times \S^m$ which we denote by
$\KKT(c)$:\footnote{We note here that there are multiple ways of
  writing the SDP complementarity conditions in general, and these can
  lead to differences in the context of algorithms for SDP, see e.g.,
  the discussion in \cite[Section 6.5.4]{ben2001lectures}. For our
  purposes, the main property that we will need of the system
  $\KKT(c)$ is that it has a finite number of solutions generically
  (Lemma \ref{lemma:finitenessKKT}).}
\begin{equation}
\label{eq:KKTeq}
\KKT(c) \quad : \quad X = A_0 + \cA(x), \quad \cA^*(Z) + c = 0, \quad XZ = 0.
\end{equation}
This system has $n+2t_m$ unknowns and consists of $n+t_m+m^2$ equations. A crucial fact about this system is that it has a finite number of solutions, assuming the parameters $A_0,A_1,\ldots,A_n$ and $c$ are generic (we come back to the genericity assumption after the statement of the result; some form of genericity is needed for the statement to be true). It is the number of solutions to the KKT system that will give an upper bound on the degree of the algebraic boundary of the polar as we will see later.
\begin{lemma}[Finiteness of KKT solutions]
\label{lemma:finitenessKKT}
For generic $A_0,A_1,\ldots,A_n$ and $c$, the KKT system of polynomial equations \eqref{eq:KKTeq} has a finite number of complex solutions $(x,X,Z) \in \CC^n \times \S^m(\CC) \times \S^m(\CC)$. Furthermore the number of such solutions is at most $2^{m^2}$.
\end{lemma}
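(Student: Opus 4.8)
The plan is to prove two assertions in turn: that for generic $A_0,\dots,A_n,c$ the system $\KKT(c)$ has finitely many complex solutions, and that this number is at most $2^{m^2}$. The cardinality bound will be a short Bézout argument once finiteness is in hand; finiteness is the substantive part, and the obstacle is that $\KKT(c)$ is very far from a generic system of $n+t_m+m^2$ equations — the $m^2$ bilinear equations $XZ=0$ between symmetric matrices impose far fewer than $m^2$ independent conditions — so one cannot simply apply a parametric transversality argument to $\KKT(c)$ as it stands. The way around this is to stratify the solution set by the rank of the dual multiplier $Z$ and treat each stratum separately.

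Fix $b\in\{0,1,\dots,m\}$ and look at solutions $(x,X,Z)$ with $\rank(Z)=b$. Since $XZ=0$ forces $\im(Z)\subseteq\ker(X)$, such a solution determines a $b$-dimensional subspace $K:=\im(Z)$ with $K\subseteq\ker(X)$, together with the vector $x$. This suggests introducing
\[
\mathcal{G}_b := \bigl\{ (K,X',Z')\in\Gr(b,m)\times\S^m(\CC)\times\S^m(\CC)\ :\ K\subseteq\ker(X'),\ \im(Z')\subseteq K \bigr\},
\]
with its dense open subset $\mathcal{G}_b^{\circ}$ where $\rank(Z')=b$. The fibre of $\mathcal{G}_b$ over $K$ is $\{X':K\subseteq\ker X'\}\times\{Z':\im Z'\subseteq K\}$; the first factor is the space of symmetric bilinear forms on $\CC^m/K$, and the second equals $\{Z':K^\perp\subseteq\ker Z'\}$, the symmetric forms on $\CC^m/K^\perp$, so the fibre has dimension $t_{m-b}+t_b$ independently of $K$, and $\dim\mathcal{G}_b=b(m-b)+t_{m-b}+t_b=t_m$. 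Now the map $(A_\bullet,c,x,X,Z)\mapsto\bigl((\im Z,X,Z),A_1,\dots,A_n,x\bigr)$ is an isomorphism from the incidence variety
\[
\mathcal{I}_b := \bigl\{ (A_0,\dots,A_n,c,x,X,Z)\ :\ \KKT(c)\text{ holds and }\rank(Z)=b \bigr\}
\]
onto $\mathcal{G}_b^{\circ}\times\S^m(\CC)^n\times\CC^n$, the inverse recovering $A_0=X'-\cA(x)$ and $c=-\cA^*(Z')$ from the rest. Hence $\dim\mathcal{I}_b=t_m+nt_m+n=(n+1)t_m+n$, which is precisely the dimension of the parameter space $\S^m(\CC)^{n+1}\times\CC^n$.

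Consider now the projection $\pi_b\colon\mathcal{I}_b\to\S^m(\CC)^{n+1}\times\CC^n$. The locus of parameters over which the fibre of $\pi_b$ has dimension $\ge 1$ is Zariski closed, and it is proper: if it were everything then $\pi_b$ would be surjective with every fibre positive-dimensional, forcing $\dim\mathcal{I}_b>\dim(\S^m(\CC)^{n+1}\times\CC^n)$ and contradicting the dimension count just made. Let $U_b$ be its complement, a dense open set, and set $U:=\bigcap_{b=0}^m U_b$ (further intersected, if one likes, with the open set where $A_1,\dots,A_n$ are linearly independent). For $(A_0,\dots,A_n,c)\in U$ the complex solution set of $\KKT(c)$ is the union over $b=0,\dots,m$ of the fibres of $\pi_b$, hence finite; so finiteness holds off a proper algebraic subset of parameter space.

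Finally, for the cardinality bound: the $n+t_m$ linear equations of $\KKT(c)$ cut out an affine subspace $\Lambda\subseteq\CC^n\times\S^m(\CC)\times\S^m(\CC)$ of dimension $t_m$, on which the solution set $V$ of $\KKT(c)$ is exactly the common zero locus of the $m^2$ quadrics $XZ=0$, i.e.\ the base locus of the linear system they span on $\Lambda\cong\CC^{t_m}$. Replace those quadrics by $t_m$ generic $\CC$-linear combinations; since $V$ is finite the linear system is base-point-free away from $V$, so a Bertini-type dimension-drop argument shows the common zero locus of the $t_m$ new quadrics (which still contains $V$) remains finite, and Bézout's inequality bounds it — hence also $V$ — by $2^{t_m}\le 2^{m^2}$, using $t_m=\binom{m+1}{2}\le m^2$. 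The technical heart of the whole argument is the dimension count in the second paragraph: that the degenerate block $XZ=0$ is tamed by the rank stratification, so that stratum by stratum $\mathcal{I}_b$ matches the parameter space in dimension exactly, is what makes the explicit parametrization above necessary and is precisely what fails for the unstratified system.
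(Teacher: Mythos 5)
Your proof is correct, and it takes a genuinely different route from the paper. The paper cites Nie, Ranestad, and Sturmfels for finiteness and contents itself with a dimension-count sketch: the two linear pieces of the KKT system have codimensions $t_m$ and $n$, the complementarity locus $\{XZ=0\}$ is asserted (by reference to that paper) to have codimension $t_m$, and a Bertini--Sard argument then gives a zero-dimensional generic intersection. You instead reprove finiteness from scratch via the rank stratification of $Z$, exhibiting for each rank $b$ an explicit isomorphism of the incidence variety $\mathcal{I}_b$ with $\mathcal{G}_b^\circ\times \S^m(\CC)^n\times\CC^n$, where $\mathcal{G}_b$ is a vector bundle of rank $t_{m-b}+t_b$ over $\Gr(b,m)$; since $\dim\mathcal{G}_b=t_m$, the dimension of $\mathcal{I}_b$ matches that of the parameter space exactly, and the fiber-dimension theorem delivers generic finiteness. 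This is more self-contained, and it makes fully explicit the mechanism --- the rank stratification, realised as a bundle over the Grassmannian --- that underlies the codimension-$t_m$ claim for $\{XZ=0\}$ quoted in the paper; it is also very much the philosophy the paper itself adopts later with the systems $\text{KKT}_r$ and the varieties $\cV_r(A)$. Two small remarks. First, the locus of parameters over which $\pi_b$ has positive-dimensional fiber is constructible rather than Zariski closed in general; but since each $\mathcal{I}_b$ is irreducible (being an open subset of a vector bundle over $\Gr(b,m)$ times an affine space) of the right dimension, the fiber-dimension theorem still yields a dense Zariski-open $U_b$ with finite fibers, so your conclusion is unaffected. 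Second, your B\'ezout step is actually sharper than the paper's: by restricting to the $t_m$-dimensional affine slice $\Lambda$ cut out by the linear equations and then replacing the $m^2$ quadrics by $t_m$ generic members of the linear system they span, you obtain $2^{t_m}\leq 2^{m^2}$, whereas the paper just multiplies all $m^2$ quadric degrees.
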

\begin{proof}
That the KKT system has a finite number of solutions generically was proved in \cite[Theorem 7]{nie2010algebraic}. We include a sketch of proof for completeness which is simply a dimension count argument. There are three equations in \eqref{eq:KKTeq}:
\begin{itemize}
\item The equation $X = A_0 + \cA(x)$ is linear and defines an affine subspace of codimension $t_m$ (we assume that $\cA$ is injective).
\item The equation $\cA^*(Z) + c = 0$ is also linear and defines an affine subspace of codimension $n$.
\item Finally the equations $XZ = 0$ can be shown to define a variety of codimension $t_m$ (see e.g., \cite[Proof of Theorem 7]{nie2010algebraic}).
\end{itemize}
If $A_0,A_1,\ldots,A_n$ and $c$ are generic, a Bertini-Sard type theorem tells us that the intersection of these three varieties will have codimension equal to the sum of the codimensions, i.e., $t_m + n + t_m = 2t_m + n$ which is the dimension of the ambient space. In other words the variety defined by \eqref{eq:KKTeq} is zero-dimensional, i.e., there are a finite number of solutions.

B\'ezout bound tells us that the number of solutions is at most the product of the degrees of the polynomial equations that form the system \eqref{eq:KKTeq}, which in this case is $2^{m^2}$.
\end{proof}

\begin{remark}[Genericity assumption of Lemma \ref{lemma:finitenessKKT}]
An assumption of genericity is necessary in general to guarantee that the system \eqref{eq:KKTeq} has a finite number of solutions. This is to rule out situations where the optimization problem \eqref{eq:optcS} has an infinite number of solutions (a positive dimensional face of $S$) or when there are an infinite number of dual multipliers. In Lemma \ref{lemma:finitenessKKT} we assumed all the parameters $A_0,\ldots,A_n,c$ generic to be able to apply a standard Bertini-Sard type theorem. We think however it may be possible to remove some of the genericity assumptions (e.g., just to assume genericity on $A_0$ and $c$) but we did not pursue this further here as the current statement of the lemma will be sufficient for our purposes.
\end{remark}

The next lemma shows that the number of solutions to the KKT system is intimately tied to the degree of the algebraic boundary of the polar $S^o$.

\begin{lemma}
\label{lemma:polarspecdeg}
Consider a spectrahedron $S = \{x \in \RR^n : A_0 + x_1 A_1 + \dots + x_n A_n \succeq 0\}$ where $A_0,\ldots,A_n \in \S^m$ and assume that $0 \in \interior S$. Let $S^o$ be its polar defined by 
\[
S^o = \left\{ c \in \RR^n : \langle c,x \rangle \leq 1 \; \forall x \in S \right\}.
\]
Then there is a polynomial of degree at most $2^{m^2}$ \modif{with real coefficients} that vanishes on the boundary of $S^o$.
\end{lemma}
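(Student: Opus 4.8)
The plan is to exhibit, for the convex body $S^o$, an explicit polynomial system whose complex zero set contains $\partial S^o$, and then to bound the degree of that zero set by B\'ezout. Write $\cA(x)=x_1A_1+\dots+x_nA_n$. I would first treat the case where $S$ is bounded and $A_0,\dots,A_n$ is generic in the sense of Lemma~\ref{lemma:finitenessKKT}; the general case is recovered at the end by perturbation, and during that perturbation one may in addition take $A_0\pd 0$, so I assume $A_0\pd 0$, which makes $x=0$ a Slater point of $\max\{c^Tx:x\in S\}$ for every $c$. The variety I would use is
\[
\cV:=\bigl\{(c,x,X,Z)\in\CC^n\times\CC^n\times\S^m(\CC)\times\S^m(\CC)\;:\;X=A_0+\cA(x),\ \cA^*(Z)+c=0,\ XZ=0,\ \Tr(A_0Z)=1\bigr\},
\]
with $\pi(c,x,X,Z):=c$. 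I would prove (i) $\partial_a S^o\subseteq\overline{\pi(\cV)}$ and (ii) $\deg\overline{\pi(\cV)}\le 2^{m^2}$.

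For (i): fix $c\in\partial S^o$. Since $S$ is bounded (hence compact) the value $h_S(c)=\max\{c^Tx:x\in S\}$ is attained, it equals $1$ because $\partial S^o=\{c:h_S(c)=1\}$, and Slater's condition forces the KKT conditions~\eqref{eq:KKT} to hold at the optimum with some dual multiplier $Z$. On the KKT variety one has the polynomial identity $c^Tx=-\Tr(\cA(x)Z)=\Tr(A_0Z)-\Tr(XZ)=\Tr(A_0Z)$ (using $\cA^*(Z)+c=0$, $X=A_0+\cA(x)$ and $XZ=0$), so $\Tr(A_0Z)=h_S(c)=1$ and $(c,x,A_0+\cA(x),Z)\in\cV$; hence $\partial S^o\subseteq\pi(\cV)$, and taking Zariski closure gives (i). Replacing the quadratic constraint $c^Tx=1$ by the equivalent linear one $\Tr(A_0Z)=1$ is precisely what keeps the degree count below. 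For (ii): $\cV$ is cut out by $t_m$ linear equations ($X=A_0+\cA(x)$), $n$ linear equations ($\cA^*(Z)+c=0$), $m^2$ quadratic equations ($XZ=0$), and one further linear equation, so by the B\'ezout inequality $\deg\cV\le 2^{m^2}$, and a linear projection does not increase degree. Under the genericity hypothesis the KKT system has finitely many solutions over each $c$ (Lemma~\ref{lemma:finitenessKKT}), so $\dim\cV\le n-1$ and $\overline{\pi(\cV)}$ is a proper subvariety of $\CC^n$ of dimension $\le n-1$. If $S^o$ is not full-dimensional a linear polynomial works, so assume it is; then $\partial_a S^o$ is a hypersurface whose irreducible components are all of dimension $n-1$, hence are top-dimensional components of $\overline{\pi(\cV)}$. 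Thus $\partial_a S^o$ lies in the union of the $(n-1)$-dimensional components of $\overline{\pi(\cV)}$, which is the zero set of a squarefree polynomial $g$ of degree $\le 2^{m^2}$; $g$ may be taken with real coefficients, and it vanishes on $\partial S^o$.

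To remove genericity (still with $S$ bounded) I would perturb $A_0,\dots,A_n$ to a nearby generic tuple with $A_0^{(\varepsilon)}\pd 0$. Boundedness of a spectrahedron with $0$ in its interior is an open condition, so $S_\varepsilon$ remains bounded with $0\in\interior S_\varepsilon$, and $S_\varepsilon\to S$, hence $S_\varepsilon^o\to S^o$, in Hausdorff distance. Applying the above to $S_\varepsilon$ yields $g_\varepsilon$ of degree $\le 2^{m^2}$ vanishing on $\partial S_\varepsilon^o$; normalizing coefficient vectors and extracting a convergent subsequence $g_{\varepsilon_k}\to g$ produces a nonzero polynomial of degree $\le 2^{m^2}$, and for any $c\in\partial S^o$, choosing $c_k\in\partial S_{\varepsilon_k}^o$ with $c_k\to c$ gives $g(c)=\lim_k g_{\varepsilon_k}(c_k)=0$. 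Finally, when $S$ is unbounded $0\in\partial S^o$ and $\partial S^o$ also contains the faces of $S^o$ exposed by the recession directions $\{x:\cA(x)\psd 0\}$ of $S$; these lie on the boundary of the convex cone $(\operatorname{rec}S)^o$, which I would treat by the same elimination applied to the ``recession KKT system'' (the system above with $A_0$ replaced by $0$, using conic duality for $\max\{c^Tx:\cA(x)\psd 0\}$ in place of SDP duality), obtaining a homogeneous polynomial covering those faces; one then combines the two polynomials.

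The step I expect to be the main obstacle is (i) together with the genericity reduction: for arbitrary $A_i$ one cannot guarantee that the SDP attains its optimum, that strong duality holds (this is what yields the identity $c^Tx=\Tr(A_0Z)$ and the value $1$), or that the KKT variety has the expected dimension, which is what forces the passage through a perturbation and a limit; the delicate point there is to check that the limiting polynomial is simultaneously nonzero and still vanishing on $\partial S^o$, which rests on the stability under perturbation of ``$0\in\interior S$'' and of boundedness, and on the Hausdorff-continuity of the polar map on convex bodies. A secondary point is matching the degree bookkeeping carefully in the unbounded case.
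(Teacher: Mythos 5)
Your proof is correct for the main (bounded, generic) case and takes a genuinely different route from the paper's, with a nice simplification in the degree bookkeeping. Both arguments are built on the KKT variety, but the paper keeps the quadratic normalization $c^Tx=1$, eliminates $(x,X,Z)$ to a variety $\cV\subset\CC^n$, and bounds $\deg\cV$ by counting intersections of $\cV$ with a generic line $\{\lambda c_0\}$ through the origin, using the substitution $Z\mapsto(c_0^Tx)Z$ to inject those intersection points into the solution set of $\KKT(c_0)$, which has $\le 2^{m^2}$ elements by B\'ezout. You instead replace $c^Tx=1$ by the linear equation $\Tr(A_0Z)=1$ — correct, since $c^Tx=-\Tr(\cA(x)Z)=\Tr(A_0Z)-\Tr(XZ)=\Tr(A_0Z)$ holds identically on the KKT locus — so the lifted variety in $(c,x,X,Z)$-space is cut out by $m^2$ quadrics and otherwise linear equations, giving $\deg\le 2^{m^2}$ directly by B\'ezout; projecting does not increase the degree. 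This avoids the line-through-origin argument and the implicit check that such a special line behaves generically for the degree count. The perturbation/limit step to remove genericity is essentially the same as the paper's.

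The one place your argument is incomplete is the unbounded case, and there you are in fact being more careful than the paper, which asserts without qualification that $\partial S^o=\{c:\max_{x\in S}c^Tx=1\}$; this fails for unbounded $S$ (e.g.\ for $S=\{(x,y):x\ge-1,\ y\ge-1\}$, $m=2$, the polar is a triangle and two of its edges consist of $c$ with $h_S(c)<1$, so the KKT-derived variety $\cV$ captures only the third edge), and the paper's perturbation step does not repair this, since a small perturbation of $A_0,\dots,A_n$ need not make $S$ bounded. Your fix — running a second, homogeneous ``recession KKT'' elimination and combining the two vanishing polynomials — is sound in spirit, but if ``combine'' means taking a product you get degree at most $2\cdot 2^{m^2}$, not $2^{m^2}$. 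That is harmless for Theorem~\ref{thm:mainlbpsdrank} (it gives $\rankpsd(C)\ge\sqrt{\log d-1}$), but it does not recover the constant stated in the lemma; neither your sketch nor the paper gives a complete argument for $2^{m^2}$ when $S$ is unbounded.
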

\begin{proof}

The points on the boundary of $S^o$ are exactly those $c$ such that $\max_{x \in S} c^T x = 1$.
Consider the system of polynomial equations obtained by adding the equation $c^Tx = 1$ to the KKT system:
\begin{equation}
\label{eq:KKT2}
\KKT \; : \;
\begin{cases}
X = A_0 + \cA(x), \quad \cA^*(Z) + c = 0, \quad XZ = 0\\
c^T x = 1.
\end{cases}
\end{equation}
We think of \eqref{eq:KKT2} as a system of equations on the variables $(x,X,Z,c)$. If we eliminate the variables $x,X,Z$ we get an algebraic variety $\cV \subset \CC^n$ in the variables $c$:
\begin{equation}
\label{eq:defV}
\cV = \elim_{c}({\sf Sols}(\KKT)).
\end{equation}
By construction this variety \modif{contains} the boundary of $S^o$, i.e., $\partial S^o \subseteq \cV \cap \RR^n$. To bound the degree of $\partial_a S^o$ it thus suffices to count the number of intersections of $\cV$ with a generic line, since $\partial_a S^o \subseteq \cV$ and $\partial_a S^o$ is a hypersurface \cite[Corollary 2.8]{sinn2015algebraic}. We will do this first in the case where $A_0,\ldots,A_n$ are generic. 
  Let $\modif{c_0} \in \CC^n$ generic and consider the line $\{ \lambda \modif{c_0} : \lambda \in \CC\}$. Since $\cV$ was defined by eliminating variables $x,X,Z$ from \eqref{eq:KKT2}, we know that $\lambda \modif{c_0} \in \cV$ if and only if there exist $(x,X,Z)$ in the solution set of $\KKT(\lambda \modif{c_0})$ and $\lambda \modif{c_0}^T x = 1$. By looking at the equations defining $\KKT(\lambda \modif{c_0})$ this \modif{implies} that $(x,X,(\modif{c_0}^T x)Z)$ is in the solution set of $\KKT(\modif{c_0})$. Thus the number of intersection points is at most the cardinality of the solution set of $\KKT(\modif{c_0})$, i.e., $2^{m^2}$. We have thus shown that $\partial_a S^o$ is a hypersurface of degree at most $2^{m^2}$.

  It thus remains to treat the case where $A_0,A_1,\ldots,A_n$ in the definition of $S$ are not generic. This can be done by using a simple perturbation argument.
  \modif{Let $N$ be the total number of the entries in $n+1$ symmetric matrices. Hence, the sequence of matrices $A_0, \ldots, A_n$ represents a point $\mathbf{A}$ in $\RR^N$. For any $k \in \mathbb{N}\setminus\{0\}$, there exists a point $\mathbf{A}_k$ in $\RR^N$ in the ball centered at $\mathbf{A}$ of radius $1/k$ which is generic and represents a sequence of symmetric matrices $A_{0,k}, \ldots, A_{n,k}$. Since, by assumption $0 \in \interior S$, $A_0$ is positive definite, one can assume w.l.o.g. that $A_{0,k}$ is positive definite. Hence the spectrahedra $S_k$ defined by $A_{0,k}, \ldots, A_{n,k}$ are generic, non-empty and such that $0 \in \interior S_k$. Hence, one can apply to them the above paragraph.}
  
  Now, let $(p_k)$ be a sequence of polynomials of degree at most $2^{m^2}$ that vanish on the boundary of $(S_k)$. We can rescale each $p_k$ to be unit-normed and we can thus assume that $(p_k)$ has a convergent subsequence that converges to some polynomial $p$. Clearly the degree of $p$ is at most $2^{m^2}$. Finally it is easy to verify that $p$ vanishes on the boundary of $S^o$.
\end{proof}

We are now in position to prove Theorem \ref{thm:mainlbpsdrank} on the lower bound for the positive semidefinite rank. The main idea is that if $C = \pi(S)$ where $S$ is a spectrahedron, then by duality $C^o$ is the intersection of $S^o$ with an affine subspace and thus the algebraic boundary of $C^o$ has degree at most that of $S^o$.

\begin{proof}[Proof of Theorem \ref{thm:mainlbpsdrank}]
Assume $C$ is a convex body that can be written as $C = \pi(S)$ where $S$ is a spectrahedron defined using an $m\times m$ linear matrix inequality and $\pi$ a linear map. We can assume that $S$ has nonempty interior, and furthermore that $0 \in \interior(S)$ since $0 \in \interior(C)$.
We are going to exhibit a polynomial of degree at most $2^{m^2}$ that vanishes on the boundary of $C^o$. Let $p$ be a polynomial of degree at most $2^{m^2}$ that vanishes on the boundary of $S^o$. Then we claim that the polynomial $q = p \circ \pi^*$ (where $\pi^*$ is the adjoint of $\pi$), which has degree at most $2^{m^2}$ vanishes on the boundary of $C^o$. Indeed if $y$ is on the boundary of $C^o$ this means that $\max_{x \in C} \langle y, x \rangle = 1$ which means that $\max_{x \in S} \langle \pi^*(y), x \rangle = 1$ and so $\pi^*(y)$ is on the boundary of $S^o$, hence $q(y) = p(\pi^*(y)) = 0$.

If we let $d$ be the degree of the algebraic boundary of $C^o$ and $m=\rankpsd(C)$ we have thus shown that $d \leq 2^{m^2}$ which implies $\rankpsd(C) = m \geq \sqrt{\log d}$.
\end{proof}

\paragraph{Application: number of vertices of spectrahedral shadows.} In this \modif{subsection} we discuss an application of Theorem \ref{thm:mainlbpsdrank} to bound the number of \emph{vertices} of spectrahedral shadows. If $C \subseteq \RR^n$ is a convex body and $x \in C$, the \emph{normal cone} of $C$ at $x$ is defined as
\[
N_C(x) := \{ c \in \RR^n : \langle c, z \rangle \leq \langle c,x \rangle \; \forall z \in C \}.
\]
A point $x \in C$ is called a \emph{vertex} if $N_C(x)$ is \emph{full-dimensional}. Observe that any vertex of $C$ must be an extreme point, but not all extreme points are vertices, see Figure \ref{fig:vertex}. Vertices play the role of singularities on the boundaries of convex sets; in fact they are also sometimes called \emph{0-singular points}. It is known, see e.g., \cite[Theorem 2.2.5]{schneiderbook} that any convex set has at most a countable number of vertices.
Vertices of spectrahedra arising from combinatorial optimization problems have been studied in \cite{laurent1995positive,silvatuncelvertices}. The next theorem gives an upper bound on the number of vertices of any spectrahedral shadow. To the best of our knowledge this is the first such bound.
\begin{theorem}
\label{thm:vertices}
If $C$ is a convex body having a semidefinite representation of size $m$, then $C$ has at most $2^{m^2}$ vertices.
\end{theorem}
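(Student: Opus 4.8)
The plan is to translate vertices of $C$ into $(k-1)$-dimensional faces of the polar $C^o$ (write $C \subseteq \RR^k$), and then to show that each such face forces a distinct degree-one factor in any real polynomial vanishing on $\partial C^o$. Since Theorem~\ref{thm:mainlbpsdrank} guarantees such a polynomial of degree at most $2^{m^2}$ — indeed, if $C$ has a semidefinite representation of size $m$ then the degree $d$ of the algebraic boundary of $C^o$ is finite and satisfies $\sqrt{\log d} \le \rankpsd(C) \le m$, hence $d \le 2^{m^2}$, and by definition $d$ is the degree of some nonzero real polynomial $p$ vanishing on $\partial C^o$ — the number of vertices will be at most $d \le 2^{m^2}$.

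First I would set up the vertex--face dictionary. Fix a vertex $x$ of $C$; then $x \in \partial C$ and, since $0 \in \interior C$, $x \neq 0$. Because $x \in C = (C^o)^o$, the functional $\langle \cdot, x\rangle$ is $\le 1$ on $C^o$, and since $x \in \partial C$ it attains the value $1$ there (the support function of the compact set $C^o$ is the gauge of $C$), so
\[
F_x := \{ c \in C^o : \langle c, x\rangle = 1\}
\]
is a nonempty subset of $\partial C^o$ lying in the affine hyperplane $H_x := \{ c : \langle c, x\rangle = 1\}$, which does not contain the origin. The key identity is $N_C(x) = \cone(F_x)$: the inclusion $\cone(F_x) \subseteq N_C(x)$ is immediate, and conversely any $c \in N_C(x)\setminus\{0\}$ has $\langle c,x\rangle = \max_{z \in C}\langle c, z\rangle > 0$ (positivity since $0 \in \interior C$), so $c/\langle c,x\rangle \in F_x$. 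Because $F_x$ lies in $H_x$ and $0 \notin H_x$, the cone $\cone(F_x)$ is full-dimensional if and only if $F_x$ spans $H_x$ affinely, i.e. $\dim F_x = k-1$; thus $x$ is a vertex of $C$ exactly when $F_x$ is a $(k-1)$-dimensional face, in which case $\mathrm{aff}(F_x) = H_x$. The assignment $x \mapsto H_x$ is injective on vertices, since the affine equation of $H_x$ is normalized (constant term $1$) and so recovers $x$.

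Now let $p$ be a nonzero real polynomial of degree $d \le 2^{m^2}$ vanishing on $\partial C^o$, as provided above. For each vertex $x$, the face $F_x$ is a $(k-1)$-dimensional convex subset of $H_x$, hence has nonempty interior in $H_x$ and thus contains a $(k-1)$-dimensional ball on which $p$ vanishes; a polynomial vanishing on a ball in an affine space is identically zero there, so $p$ vanishes on all of $H_x$. After an affine change of coordinates sending the degree-one polynomial $\ell_x := \langle \cdot, x\rangle - 1$ to a single coordinate, the vanishing of $p$ on $H_x = \{\ell_x = 0\}$ says precisely that $p$ has no summand free of that coordinate, i.e. $\ell_x \mid p$. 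For distinct vertices $x \ne x'$ the linear forms $\ell_x$ and $\ell_{x'}$ are non-associate (both have constant term $-1$, so $\ell_x = \lambda \ell_{x'}$ forces $\lambda = 1$ and $x = x'$), so the number of vertices is at most the number of pairwise non-associate irreducible factors of $p$, which is at most $\deg p = d \le 2^{m^2}$.

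I expect the only delicate part to be the convex geometry of the first step: verifying that $F_x$ is a face contained in $\partial C^o$, that $N_C(x) = \cone(F_x)$, and that a full-dimensional normal cone corresponds exactly to $F_x$ having dimension $k-1$ — all of which rest on $0 \in \interior C$ and the bipolar theorem, and which require a little care since the definition of convex body used here does not demand that $C$ itself be bounded (its polar $C^o$, however, always is). The divisibility step and the factor count are then routine.
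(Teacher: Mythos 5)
Your proposal is correct and takes essentially the same route as the paper's own proof: each vertex $x$ of $C$ forces the hyperplane $\{\langle c,x\rangle = 1\}$ to lie inside the algebraic boundary of $C^o$, and the resulting pairwise non-associate linear factors bound the number of vertices by the degree $d \le 2^{m^2}$. The paper states the vertex-to-hyperplane correspondence in one line (with a figure) and you supply the full justification — the identity $N_C(x) = \cone(F_x)$, the dimension count, and the divisibility $\ell_x \mid p$ — so your write-up is just a more detailed version of the same argument.
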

\begin{proof}
Any vertex of $C$ will contribute a linear factor in the algebraic boundary of $C^o$: indeed if $x$ is a vertex of $C$ then the algebraic boundary of $C$ must contain the hyperplane $\{c \in \RR^n : c^T x = 1\}$ (see e.g., Figure \ref{fig:vertex}(right)). Thus the degree of $\partial_a C^o$ is greater than or equal the number of vertices of $C$. The result follows since the degree of $\partial_a C^o$ is at most $2^{m^2}$.
\end{proof}

\begin{figure}[ht]
  \centering
  \includegraphics[width=12cm]{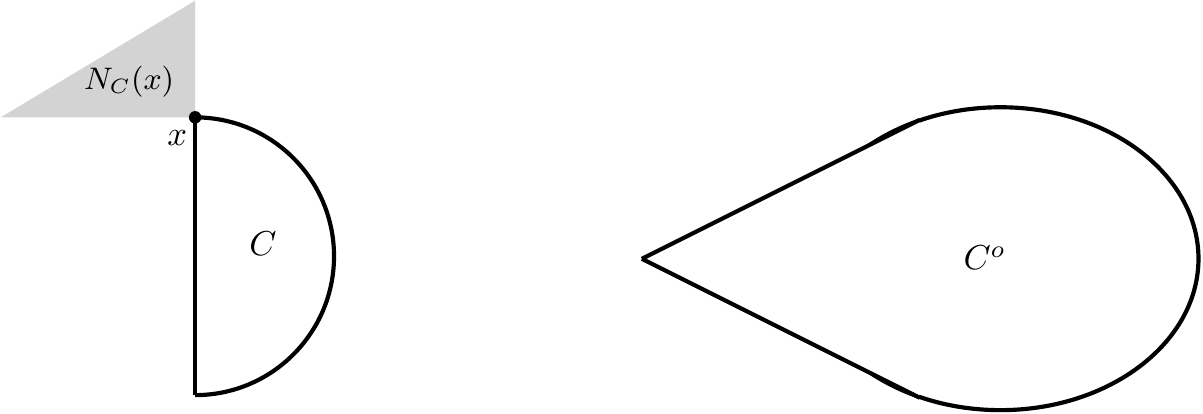}
  \caption{Left: A vertex of a convex set $C$. Right: The polar of $C$. We see that each vertex contributes a hyperplane in the algebraic boundary $\partial_a C^o$.}
  \label{fig:vertex}
\end{figure}

\section{Tightness of lower bound, and algebraic degree of semidefinite programming}

In this section we prove Theorem \ref{thm:tightness}. We will show that the lower bound of Theorem \ref{thm:mainlbpsdrank} is tight up to a constant factor on certain random spectrahedra of appropriate dimension $n$, namely $n \approx t_{m/2}$.

Let $S$ be a spectrahedron defined using matrices $A:=(A_0,\ldots,A_n)$. In the previous section we saw that if we project the following KKT equations 
\begin{equation}
\label{eq:KKTS}
\KKT \; : \;
\begin{cases}
X = A_0 + \cA(x), \quad \cA^*(Z) + c = 0, \quad XZ = 0\\
c^T x = 1
\end{cases}
\end{equation}
on $c \in \CC^n$ we get an algebraic variety
\[
\cV(A) = \elim_{c}({\sf Sols}(\KKT))
\]
that vanishes on the boundary of $S^o$. This variety could coincide exactly with $\partial_a S^o$ but it can also contain spurious components that do not intersect $\partial S^o$ and thus are not in its Zariski closure (see Section \ref{sec:example} later for an example).

In order to prove our result we need to understand the irreducible
components of the variety $\cV(A)$. If we can show that there is an
\emph{irreducible component} $\cW$ of $\cV(A)$ whose intersection with
the boundary of $S^o$ has dimension the one of $\cW$ then we know that
the degree of the algebraic boundary of $S^o$ is at least $\deg
\cW$. When $A$ is generic, the irreducible components of $\cV(A)$ have
been studied in \cite{nie2010algebraic} where it was shown that they
are obtained by imposing rank conditions on the matrices $X$ and $Z$
in the KKT equations, namely by considering the following system for a
fixed $r$:
\begin{equation}
\label{eq:KKTrank0}
\text{KKT}_r \; : \;
\begin{cases}
 X = \cA(x) + A_0, \;\; \cA^*(Z) + c = 0, \;\; XZ = 0\\
 c^T x = 1,\\
 \rank(X) \leq r, \;\; \rank(Z) \leq m-r.
\end{cases}
 \end{equation}
 We think of \eqref{eq:KKTrank0} as a system of equations in
 $(x,X,Z,c)$. If we eliminate the variables $(x,X,Z)$ \modif{from the
   above equations} we get an algebraic variety in $\CC^n$ that is
 contained in $\cV(A)$. We call this variety $\Vr(A)$:
\begin{equation}
\label{eq:Dr*A}
\Vr(A) = \elim_{c}({\sf Sols}(\KKT_r)) \subseteq \cV(A).
\end{equation}
For generic $A$, it was shown in \cite[Theorem 13]{nie2010algebraic}
that $\Vr(A)$ is a hypersurface provided $r$ satisfies the
\emph{Pataki bounds}:
\begin{equation}
\label{eq:pataki}
n \geq t_{m-r} \quad \text{ and } \quad t_r \leq t_m - n.
\end{equation}
Using Bertini theorem one can show that this variety is also
irreducible over $\CC$ provided $n > t_{m-r}$.
\begin{lemma}
\label{lem:irredKKT}
For generic $A_0,\ldots,A_n$ the variety $\Vr(A)$ is
\emph{irreducible} over $\CC$ provided $n > t_{m-r}$.
\end{lemma}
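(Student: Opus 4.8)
The plan is to exhibit $\Vr(A)$ as the Zariski closure of the image of an \emph{irreducible} variety under a morphism; since morphisms send irreducible varieties to irreducible sets and Zariski closure preserves irreducibility, this establishes the lemma. Throughout I would keep $A=(A_0,\dots,A_n)$ generic, so that $\cA$ is injective, the affine map $x\mapsto\cA(x)+A_0$ has generic image in $\S^m(\CC)$, the structural results of \cite{nie2010algebraic} are available (in particular, in the Pataki regime \eqref{eq:pataki}, $\Vr(A)$ is a hypersurface of $\CC^n$, hence pure of dimension $n-1$), and the genericity statements invoked below hold.

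First I would analyse the primal rank locus. Let $\Sigma_r=\{M\in\S^m(\CC):\rank M\le r\}$, an irreducible variety of codimension $t_{m-r}$ in $\S^m(\CC)$ whose rank-$(r-1)$ sublocus $\Sigma_{r-1}$ is proper and closed. Then $\mathcal D_r:=\{x\in\CC^n:\cA(x)+A_0\in\Sigma_r\}$ is, via the injective affine map, a generic affine-linear section of $\Sigma_r$; because the hypothesis $n>t_{m-r}$ forces this section to have dimension $n-t_{m-r}\ge 1$, Bertini's irreducibility theorem gives that $\mathcal D_r$ is irreducible of dimension $n-t_{m-r}$, and that a generic point of it satisfies $\rank(\cA(x)+A_0)=r$ exactly. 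This is the only step that uses the hypothesis $n>t_{m-r}$.

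Next I would form the incidence variety $\mathcal I_r:=\{(x,Z)\in\mathcal D_r\times\S^m(\CC):(\cA(x)+A_0)Z=0,\ \rank Z\le m-r\}$ and its projection to $\mathcal D_r$. Over the dense open locus where $X:=\cA(x)+A_0$ has rank exactly $r$, the equation $XZ=0$ reads $\im Z\subseteq\ker X$, and since $\dim\ker X=m-r$ the bound $\rank Z\le m-r$ is then automatic; hence the fibre is the linear subspace $\{Z\in\S^m(\CC):\im Z\subseteq\ker X\}$ of $\S^m(\CC)$, which has dimension $t_{m-r}$ and is irreducible. A variety fibred over an irreducible base with irreducible fibres of constant dimension is irreducible, so this part of $\mathcal I_r$ is irreducible of dimension $n$, while a dimension count over the lower-rank strata $\{x:\rank(\cA(x)+A_0)\le s\}$ with $s<r$ (on which the fibres are themselves determinantal loci) shows that the rest of $\mathcal I_r$ has dimension at most $n-1$; thus $\mathcal I_r$ has a unique irreducible component $\widehat{\mathcal I}_r$ of dimension $n$. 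To incorporate the normalisation $c^Tx=1$, note that on $\mathcal I_r$ one has $\tr((\cA(x)+A_0)Z)=0$, hence $\langle\cA^*(Z),x\rangle=\tr(\cA(x)Z)=-\tr(A_0Z)$, so with $c=-\cA^*(Z)$ the condition $c^Tx=1$ becomes exactly $\tr(A_0Z)=1$. The $\CC^*$-action $(x,Z)\mapsto(x,\lambda Z)$ preserves $\widehat{\mathcal I}_r$ and scales $\tr(A_0Z)$ linearly, so $\mathcal N:=\widehat{\mathcal I}_r\cap\{\tr(A_0Z)=1\}$ is the image of the irreducible open set $\{(x,Z)\in\widehat{\mathcal I}_r:\tr(A_0Z)\ne0\}$ under $(x,Z)\mapsto(x,Z/\tr(A_0Z))$, and is therefore irreducible.

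Finally, eliminating $X$ (determined by $x$) and $c$ (determined by $Z$) from $\KKT_r$ identifies ${\sf Sols}(\KKT_r)$ with $\{(x,Z)\in\mathcal I_r:\tr(A_0Z)=1\}$, hence $\Vr(A)$ with the Zariski closure of the image of that set under $(x,Z)\mapsto-\cA^*(Z)$. The part of that set lying in $\mathcal I_r\setminus\widehat{\mathcal I}_r$ has dimension at most $n-2$, so its image has dimension at most $n-2$; since $\Vr(A)$ is a hypersurface of $\CC^n$ by \cite[Theorem 13]{nie2010algebraic} and so pure of dimension $n-1$, this low-dimensional image is contained in $\overline{\{-\cA^*(Z):(x,Z)\in\mathcal N\}}$, whence $\Vr(A)=\overline{\{-\cA^*(Z):(x,Z)\in\mathcal N\}}$ — the closure of the image of the irreducible variety $\mathcal N$, hence irreducible. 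I expect the main obstacle to be the incidence step: controlling $\mathcal I_r$ over the degenerate determinantal strata so that $\widehat{\mathcal I}_r$ is indeed its unique top-dimensional component. One must also verify that a single generic choice of $A$ simultaneously makes the affine map $x\mapsto\cA(x)+A_0$ generic enough for Bertini, keeps $\tr(A_0Z)$ non-constant on $\widehat{\mathcal I}_r$, and satisfies the hypotheses of \cite{nie2010algebraic}; these are routine genericity checks but need to be arranged together.
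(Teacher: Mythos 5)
Your proof is correct, and it reaches the same conclusion by a genuinely different route from the paper's. The paper applies Bertini's irreducibility theorem to the projection $u(X,Z)=X$ restricted to the complementarity locus $\{XZ=0\}^r$, relying on the irreducibility of $\{XZ=0\}^r$ proved in \cite[Lemma 6]{nie2010algebraic}, and then handles the normalization $c^Tx=1$ by exhibiting an explicit rational (hence regular) map $\phi(x,X,Z)=\bigl(x,X,-Z/(\cA^*(Z)^Tx),\cA^*(Z)/(\cA^*(Z)^Tx)\bigr)$ whose image is the $\KKT_r$ variety. You instead apply Bertini directly to the determinantal variety $\Sigma_r$ to obtain irreducibility of the primal rank locus $\mathcal D_r$, build the incidence variety $\mathcal I_r$ as a fibration over $\mathcal D_r$ whose fibres over the rank-$r$ stratum form a vector bundle of rank $t_{m-r}$, stratify to isolate the unique top-dimensional component $\widehat{\mathcal I}_r$, and then dehomogenise via the identity $c^Tx=\tr(A_0Z)$ and the $\CC^*$-action, closing with a purity argument. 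Your approach is more self-contained in that it avoids citing \cite[Lemma 6]{nie2010algebraic}, at the price of (a) re-proving the relevant irreducibility through the fibration and the lower-strata dimension count, and (b) leaning on \cite[Theorem 13]{nie2010algebraic} for the purity of $\Vr(A)$ — a fact which requires the full Pataki bounds \eqref{eq:pataki}, not just $n>t_{m-r}$, so strictly speaking your proof proves the lemma only in the Pataki range (which is all the paper needs in Lemma~\ref{lem:amelunxen}, but is formally narrower than the statement). The paper's rational-map argument sidesteps the purity issue entirely: the image of the irreducible solution set of \eqref{eq:KKT7} under $\phi$ is already exactly the $\KKT_r$ variety, so no component-bookkeeping is needed. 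One small point to tighten in your write-up: the clean general principle is not quite ``fibred with irreducible equidimensional fibres implies irreducible'' (that is false without hypotheses); what you actually use, and what suffices, is that over the dense open stratum $\{\rank X=r\}$ the incidence is a vector bundle over an irreducible base, hence irreducible, and its closure in $\mathcal I_r$ is $\widehat{\mathcal I}_r$.
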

Before proving this lemma we first explain the reason for the
condition $n > t_{m-r}$ (which is stronger than the condition imposed
by the Pataki bound \eqref{eq:pataki}). The variety $\cV_r(A)$ is the
dual of the determinantal variety
$\{x \in \CC^n : \rank(A_0 + x_1 A_1 + \cdots + x_n A_n) \leq
r\}$. The condition $n > t_{m-r}$ rules out the case where this
determinantal variety is zero-dimensional, in which case the dual
variety $\Vr(A)$ is a union of hyperplanes and is thus not
irreducible. Note that if we are only interested in irreducibility
statements over $\QQ$ (assuming that $A_0,\ldots,A_n$ are generic with
entries in $\QQ$) then we do not need to impose such a condition. See
\cite[Remark 2.2]{sinn2015generic} for more on this.
\begin{proof}[Proof of Lemma \ref{lem:irredKKT}]
  The main ingredient of the proof is Bertini's irreducibility theorem
  \modif{\cite[Theorem 4.23]{coursensag}}. We will start by showing
  that the variety
\begin{equation}
\label{eq:KKT7}
X = \cA(x) + A_0, \;\; XZ = 0, \;\; \rank(X) \leq r, \;\; \rank(Z) \leq m-r
\end{equation}
is irreducible for a generic choice of $A_0,\ldots,A_n$. In
\cite[Lemma 6]{nie2010algebraic} it was shown that
$\{XZ=0\}^r := \{(X,Z) : XZ = 0, \rank(X) \leq r, \rank(Z) \leq m-r\}$
is irreducible.  Consider the projection map $u(X,Z) = X$. We know
that $u(\{XZ=0\}^r)$ is the determinantal variety consisting of
symmetric matrices of rank $\leq r$ and has dimension
$t_m-t_{m-r}$. By Bertini theorem \cite[Theorem 4.23]{coursensag} we
know that for a generic affine subspace $L$ of dimension $n$ the
variety $u^{-1}(L)$ is going to be irreducible provided
$t_m-t_{m-r} \geq 1+\codim L = 1+t_m - n$, i.e., provided that
$n \geq t_{m-r}+1$. In other words this tells us that \eqref{eq:KKT7}
is irreducible for a generic choice of $A_0,\ldots,A_n$.

\modif{Consider now the map
  $\phi(x,X,Z) = (x,X,-Z/(\cA^*(Z)^T x),\cA^*(Z)/(\cA^*(Z)^T x))$
  (where the last coordinates stand for $c$). Observe that the image
  of the restriction of $\phi$ to the solution set of \eqref{eq:KKT7}
  is exactly the variety defined by \eqref{eq:KKTrank0}. 
  Since $\phi$ is rational at all points, it is regular \cite[Thm 4, Sec. 3.2]{Shafarevich77}. Because the solution set of \eqref{eq:KKT7} is irreducible, its image by $\phi$ is irreducible. Since $\Vr(A)$ is the projection of an irreducible variety it is also irreducible.}
\end{proof}

The degrees of the irreducible components $\Vr(A)$ were computed (for generic $A=(A_0,\ldots,A_n)$) in \cite{nie2010algebraic,von2009general} and are denoted by $\delta(n,m,r)$. The resulting formulas involve minors of the matrix of binomial coefficients. An elementary analysis of these formulas allows us to show that in a special regime for $n$ and $r$, the algebraic degree is at least $2^{m^2/20}$.

\begin{lemma}
\label{lem:algdegformula}
Assume $m$ even and large enough and consider $n = t_{m/2}+1$ and $r = m/2+1$. 
Then for generic $A=(A_0,\ldots,A_n) \in (\S^m(\CC))^{n+1}$ the variety $\Vr(A)$ has degree $\geq 2^{m^2 / 20}$.
\end{lemma}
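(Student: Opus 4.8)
The plan is to reduce the statement to an elementary asymptotic estimate on the algebraic degree of semidefinite programming, and then carry out that estimate. First I would check that the chosen parameters $n=t_{m/2}+1$, $r=m/2+1$ lie in the range where the preceding results apply: for $m$ large one has $n>t_{m-r}=t_{m/2-1}$, and the second Pataki inequality in \eqref{eq:pataki} also holds since $t_r=t_{m/2+1}$ grows like $m^2/8$ while $t_m-n$ grows like $3m^2/8$. By \cite[Theorem 13]{nie2010algebraic} together with Lemma~\ref{lem:irredKKT}, $\Vr(A)$ is then, for generic $A$, an irreducible hypersurface whose degree equals the algebraic degree $\delta(n,m,r)$ studied in \cite{nie2010algebraic,von2009general}. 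So the lemma reduces to showing $\delta(n,m,r)\ge 2^{m^2/20}$.

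Next I would invoke the closed formula for $\delta(n,m,r)$ from \cite{von2009general,nie2010algebraic}, which expresses it as a combinatorial sum of products of binomial coefficients with nonnegative terms, so that it suffices to bound one well-chosen term from below. The natural choice comes from the fact (recalled above) that $\Vr(A)$ is the projective dual of a generic linear section $Y$ of the symmetric determinantal variety $\{X\in\S^m(\CC):\rank X\le r\}$: a dimension count gives $\dim Y=m/2$, and a generic linear section preserves the degree, so $\deg Y$ equals the degree of that determinantal variety, which by the classical J\'ozefiak--Lascoux--Pragacz / Harris--Tu formula is
\[
D \;=\; \prod_{j=0}^{m/2-2}\ \frac{\binom{m+j}{\,m/2-1-j\,}}{\binom{2j+1}{j}}.
\]
One expects $\delta(n,m,r)\ge D$ (this term appearing with positive sign in the von Bothmer--Ranestad expansion, or, geometrically, the dual of $Y$ having degree at least $\deg Y$), which reduces the problem to $D\ge 2^{m^2/20}$.

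The last step is then a Stirling estimate. Writing $k=m/2$ and replacing each binomial coefficient by the entropy form of Stirling's formula (using $\binom{a}{b}\ge c\,a^{-1/2}2^{a\,h(b/a)}$ for the numerators, with $h$ the binary entropy, and $\binom{2j+1}{j}\le 2^{2j+1}$ for the denominators), the polynomially many error factors contribute only $O(k\log k)$, and one is left with a Riemann sum,
\[
\log_2 D \;\ge\; (1-o(1))\,k^2\Big(\int_0^1 (3-x)\,h\!\big(\tfrac{x}{3-x}\big)\,dx \;-\; \int_0^1 2x\,dx\Big),
\]
whose bracketed constant one evaluates (splitting into elementary integrals of $x\log_2 x$, $(3-2x)\log_2(3-2x)$ and $(3-x)\log_2(3-x)$) to be roughly $0.57$. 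Hence $\log_2 D \sim \tfrac{m^2}{7}$, comfortably above $\tfrac{m^2}{20}$; the generous gap between $1/7$ and $1/20$ is exactly what lets one use crude, elementary bounds on the binomials and still absorb all lower-order terms once $m$ is large enough.

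The step I expect to be the real obstacle is the middle one: pinning down precisely which summand of the (somewhat intricate) von Bothmer--Ranestad formula for $\delta(n,m,r)$ delivers the bound, i.e.\ justifying $\delta(n,m,r)\ge D$ rigorously rather than heuristically --- either by reading it off the formula directly, or by establishing the geometric inequality $\deg Y^\vee\ge\deg Y$ for the particular $Y$ at hand (a generic linear section of the symmetric rank-$\le r$ locus whose dual is a hypersurface). Once that reduction is secured, the estimate on $D$ in the final step is routine bookkeeping of exponents of $2$.
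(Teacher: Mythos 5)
There is a genuine gap at the step you yourself flag as the ``real obstacle,'' and neither of the two justifications you sketch for $\delta(n,m,r)\ge D$ actually works. First, the quantity $D$ you single out is, by the Harris--Tu formula, exactly $\psi_{[m/2,m]} = \delta(t_{m/2-1},m,r)$, i.e.\ the algebraic degree at the \emph{bottom} of the Pataki range $n'=t_{m-r}$. It does \emph{not} appear as a summand $\psi_I\psi_{I^c}$ in the formula \eqref{eq:defdelta} for the value $n=t_{m/2}+1$ of the lemma: one would need $I=\{1,\dots,m/2-1\}$, which has $s(I)=t_{m/2-1}\ne n$, so the constraint $s(I)=n$ is violated. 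Second, the geometric inequality $\deg Y^\vee\ge\deg Y$ for a projective dual is false in general (linear spaces give equality, and there are determinantal examples with strict drop), and you give no argument specific to the present $Y$. So the reduction to $D\ge 2^{m^2/20}$ is not established, and the subsequent Stirling estimate, while fine, rests on nothing.

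What the paper does instead is purely combinatorial and avoids the duality heuristic entirely. It chooses a \emph{valid} index set $I=\{1,\dots,m/2-2\}\cup\{m\}$, which does satisfy $|I|=m/2-1=m-r$ and $s(I)=t_{m/2-2}+m=t_{m/2}+1=n$, so that $\delta(n,m,r)\ge\psi_I\psi_{I^c}\ge\psi_{I^c}$ with $I^c=\{m/2-1,\dots,m-1\}$ an interval. It then proves a general lower bound on $\psi$ of intervals (Lemma~\ref{lem:psi-intervals}, which \emph{is} derived from the Harris--Tu product formula via \eqref{eq:harristu}--\eqref{eq:Bmrid}) and applies it to $\psi_{[m/2-1,m-1]}$ to land above $2^{m^2/20}$. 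So the Harris--Tu input you wanted to use is indeed the right tool, but it has to be routed through the interval $I^c$ coming from a legitimate summand of \eqref{eq:defdelta}, not through the degree of the determinantal variety itself. If you replace your middle step by this explicit choice of $I$ and a bound on $\psi_{I^c}$, your final estimate (which is more precise than the paper's crude $(3/2)$-per-factor bound) would go through, with room to spare over the $1/20$ constant.
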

\begin{proof}
The proof is in Appendix \ref{app:algdegformula}.
\end{proof}

In order to use Lemma \ref{lem:algdegformula} we need to show that there is at least one choice of $A=(A_0,\ldots,A_n)$ with $n=t_{m/2}+1$ such that the variety $\Vr(A)$ with $r=m/2+1$ will actually belong to $\partial_a S^o$, where $S = \{x : A_0 + x_1 A_1 + \dots +x_nA_n \psd 0\}$.
We can prove this by appealing to results by Amelunxen and B\"urgisser \cite{amelunxen2015intrinsic} where random semidefinite programs were analyzed and where it was shown that every value of rank in the Pataki range occurs with ``positive probability''.

\begin{lemma}
\label{lem:amelunxen}
Let $m$ and $1 \leq n \leq t_m$ be fixed. Let $r$ in the associated Pataki range \eqref{eq:pataki} with the additional constraint $n > t_{m-r}$. Let $\Gamma$ be any Zariski open set in $(\S^m(\CC))^{n+1}$. Then there exists $A=(A_0,\ldots,A_n) \in \Gamma \cap (\S^m(\RR))^{n+1}$ such that the variety $\Vr(A)$ is contained in $\partial_a S^o$ where $S = \{x \in \RR^n : A_0 + x_1 A_1 + \cdots + x_n A_n \psd 0\}$.
\end{lemma}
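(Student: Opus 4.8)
The plan is to combine two facts already available — that for generic $A$ the variety $\Vr(A)$ is irreducible (Lemma~\ref{lem:irredKKT}, which uses $n > t_{m-r}$) and that it is a hypersurface, hence of dimension $n-1$ (\cite[Theorem~13]{nie2010algebraic}, which uses the Pataki bounds) — with one probabilistic input from the analysis of random semidefinite programs. Since these two generic properties hold for $A$ in a nonempty Zariski-open subset of $(\S^m(\CC))^{n+1}$, intersecting it with $\Gamma$ gives a nonempty Zariski-open set $\Gamma'$, and the complement of $\Gamma'$ meets $(\S^m(\RR))^{n+1}$ in a set of Lebesgue measure zero. The reduction is then: it suffices to exhibit a single $A \in \Gamma' \cap (\S^m(\RR))^{n+1}$ with $0 \in \interior S_A$ (where $S_A := \{x\in\RR^n : A_0 + \cA(x)\succeq 0\}$) together with a semialgebraic set $T \subseteq \Vr(A)\cap\partial S_A^o$ of real dimension $n-1$. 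Indeed, the Zariski closure $W$ of $T$ is a subvariety of the irreducible variety $\Vr(A)$; were $W$ proper it would have complex dimension $\le n-2$, so its real points would have real dimension $\le n-2$, contradicting $T\subseteq W$ with $\dim_\RR T = n-1$; hence $W = \Vr(A)$, and since $T \subseteq \partial S_A^o \subseteq \partial_a S_A^o$ and $\partial_a S_A^o$ is Zariski closed, $\Vr(A) = W \subseteq \partial_a S_A^o$.

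To produce $A$ and $T$, I would invoke Amelunxen and B\"urgisser \cite{amelunxen2015intrinsic}: in a suitable absolutely continuous random model of the data $A = (A_0,\dots,A_n)$ and $c$, arranged so that $0 \in \interior S_A$ almost surely, the event $E_r$ that $\max_{x\in S_A} c^T x$ is attained at some $x^*$ with $\rank(A_0+\cA(x^*)) = r$ together with a dual optimal $Z^*$ of rank $m-r$ occurs with positive probability, because $r$ lies in the Pataki range. Since also $A \in \Gamma'$ with probability $1$, the event $E_r \cap \{A\in\Gamma'\}$ has positive probability, so by Fubini there is a (real) $A \in \Gamma'$ for which $U := \{c\in\RR^n : E_r\text{ holds}\}$ has positive Lebesgue measure; fix such an $A$ and write $S := S_A$. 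For $c\in U\setminus\{0\}$ we have $v(c) := \max_{x\in S}c^Tx > 0$ because $0\in\interior S$, hence $\bar c := c/v(c)\in\partial S^o$; and with $x^*, Z^*$ as in $E_r$, $X^* := A_0+\cA(x^*)$ and $Z' := Z^*/v(c)$, complementary slackness gives $X^*Z^* = 0$, so $(x^*, X^*, Z', \bar c)$ satisfies every equation of $\KKT_r$ in \eqref{eq:KKTrank0} (the rank conditions by construction and $\bar c^Tx^* = 1$), whence $\bar c \in \Vr(A)$. Therefore $T := \{c/v(c) : c\in U\setminus\{0\}\} \subseteq \Vr(A)\cap\partial S^o$.

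Finally I would check $\dim_\RR T = n-1$. The set $U$ is semialgebraic of positive Lebesgue measure, so $\dim_\RR U = n$, and it is a cone, since rescaling $c$ by $\lambda > 0$ leaves the set of primal optima unchanged and multiplies the dual optima by $\lambda$, preserving the conditions defining $E_r$. Hence the semialgebraic map $c\mapsto c/v(c)$ on $U\setminus\{0\}$ (well defined since $v$ is positive and positively homogeneous there) has fibers equal to open rays, each of dimension $1$, so $\dim_\RR T = \dim_\RR U - 1 = n-1$. Together with the reduction of the first paragraph this yields $\Vr(A)\subseteq\partial_a S^o$.

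The step I expect to be the main obstacle is importing \cite{amelunxen2015intrinsic} in exactly the form used above: one must set up a random model of the data that is absolutely continuous with respect to Lebesgue measure (so that ``positive probability'' becomes ``positive Lebesgue measure'', which is what lets Fubini pin down a real $A$ in the prescribed Zariski-open set $\Gamma'$), that is compatible with $0\in\interior S_A$, and whose positive-probability event is the full strict-complementarity event (primal rank $r$, dual rank $m-r$, complementary slackness) — the last being precisely what forces $\bar c$ onto the component $\Vr(A)$ rather than merely onto $\cV(A)$. The rest — the $\KKT_r$ verification and the semialgebraic dimension bookkeeping — is routine.
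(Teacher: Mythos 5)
Your proposal is correct and follows essentially the same route as the paper: invoke Amelunxen--B\"urgisser to get a positive-probability (hence positive-Lebesgue-measure, via a semialgebraicity argument) set of objectives $c$ for which the optimal rank is $r$, pin down a real $A$ in the required Zariski-open set, exhibit a semialgebraic subset of $\Vr(A)\cap\partial S^o$ of real dimension $n-1$, and conclude via irreducibility of $\Vr(A)$ (Lemma~\ref{lem:irredKKT}). The only cosmetic differences are that you rescale $c\mapsto c/v(c)$ to land on $\partial S^o$ and argue dimensions via the cone structure of $U$, whereas the paper intersects the rank-$r$ region $\tilde U$ directly with $\partial S^o$ and uses that $\tilde U$ has nonempty interior; these are interchangeable bookkeeping choices, and your explicit tracking of the dual rank $m-r$ is harmless since $\rank X = r$ and $XZ=0$ with $Z\succeq 0$ already force $\rank Z\le m-r$.
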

\begin{proof}
See Appendix \ref{app:amelunxen}.
\end{proof}

The proof of Theorem \ref{thm:tightness} is now complete:

\begin{proof}[Proof of Theorem \ref{thm:tightness}]
Let $m$ be even and large enough and let $n=t_{m/2}+1$. Lemma \ref{lem:amelunxen} with $r=m/2+1$ tells us that there is a spectrahedron such that the variety $\Vr(A)$ is contained in $\partial_a S^o$ where $S = \{x : A_0 + x_1 A_1 + \dots + x_n A_n \psd 0\}$. By Lemma \ref{lem:algdegformula} we know that the degree of this variety is at least $2^{m^2/20}$ and so $d = \deg (\partial_a C^o) \geq 2^{m^2/20}$. But this means that $\rankpsd(S) \leq m \leq \sqrt{20 \log d}$ as desired.
\end{proof}

\section{Example}
\label{sec:example}

In this section we consider an example of spectrahedral shadow to illustrate some of the ideas presented in the proofs of Theorem \ref{thm:mainlbpsdrank} and Theorem \ref{thm:tightness}.

Consider the following linear matrix inequality:
\[
A(x,y,s,t) := \begin{bmatrix}
1+s & t & x+s & y-t\\
t & 1-s & -y-t & x-s\\
x+s & -y-t & 1+x & -y\\
y-t & x-s & -y & 1-x
\end{bmatrix}.
\]
One can show that the projection of the associated spectrahedron on the variables $(x,y)$ is the regular pentagon in the plane, i.e., if we let $S$ be the spectrahedron associated to $A$ and $\pi(x,y,s,t) = (x,y)$ then:
\begin{equation}
\label{eq:Cpentagon}
C := \pi(S) = \conv\left \{ \left(\cos\left(\frac{2k\pi}{5}\right), \sin\left(\frac{2k\pi}{5}\right) \right), k=0,\ldots,4\right\}.
\end{equation}
It is not difficult to see that the polar of $C$ is another regular pentagon but slightly rotated and scaled:
\[
C^o = \frac{1}{\cos(\pi/5)}\conv\left \{ \left(\cos\left(\frac{2(k+1/2)\pi}{5}\right), \sin\left(\frac{2(k+1/2)\pi}{5}\right) \right), k=0,\ldots,4\right\}.
\]
From Section \ref{sec:proof} we know that the KKT equations allow us to get a polynomial that vanishes on the boundary of $C^o$. The associated variety (denoted by $\cV$ in \eqref{eq:defV}) in this case is shown in Figure \ref{fig:pentagon_polar}.
\begin{figure}[ht]
  \centering
  \includegraphics[width=8cm]{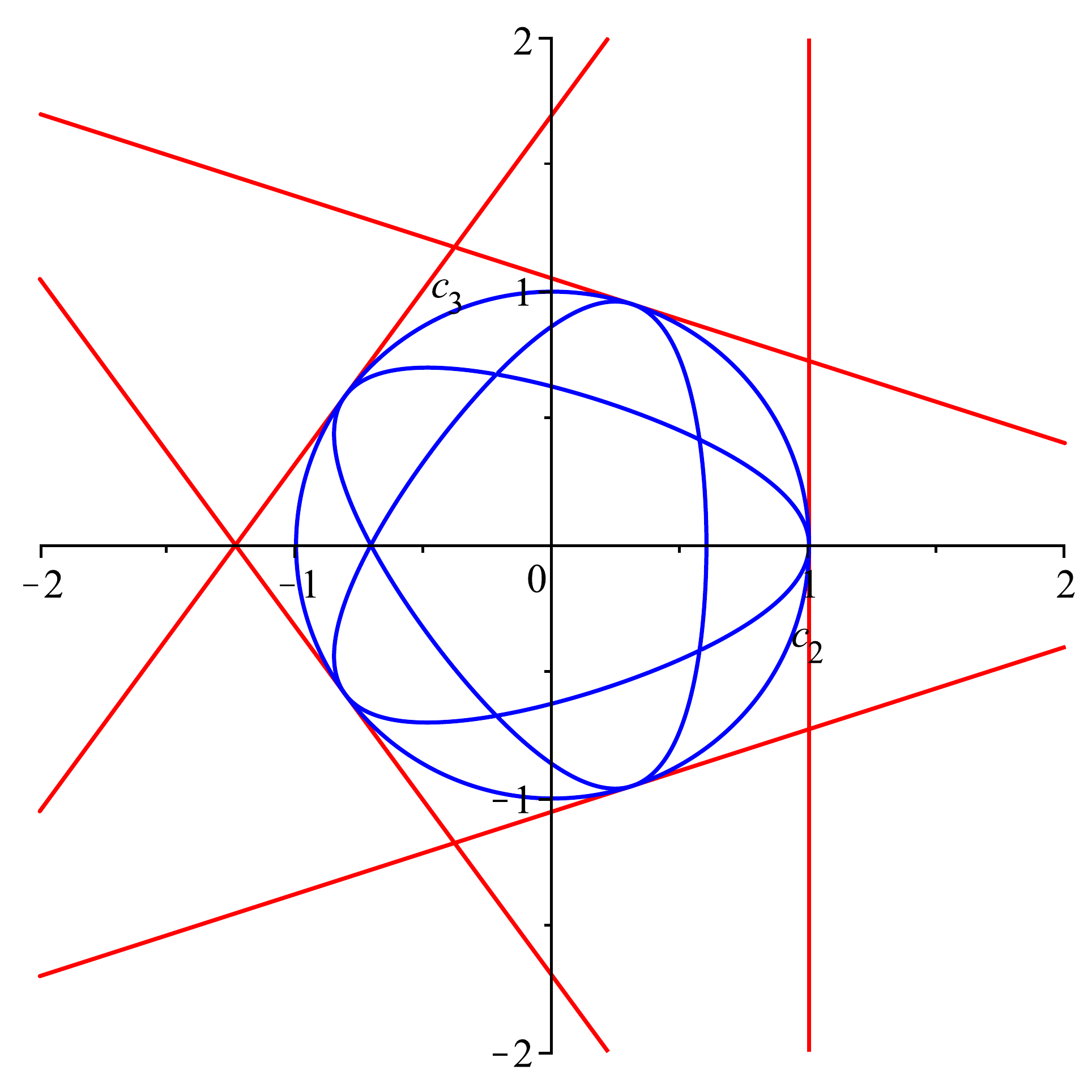}
  \caption{Variety $\cV$ defined in \eqref{eq:defV} that vanishes on the boundary of $C^o$, where $C$ is the regular pentagon, see \eqref{eq:Cpentagon}. We see that $\partial_a C^o \subset \cV$ and that $\cV$ has extra components not in $\partial_a C^o$. These are shown in blue.}
  \label{fig:pentagon_polar}
\end{figure}
We see that the variety $\cV$ contains the algebraic boundary of the polar $C^o$ (red lines). However we also see that it has extra components that are not in $\partial_a C^o$: these extra components are shown in blue in Figure \ref{fig:pentagon_polar}.

\section{Discussion}

The algebraic argument given in this paper can also be used to lower bound the size of second-order cone lifts, or more generally lifts using products of $\S^k_+$. More precisely one can show that if $C$ has a lift using $r$ copies of $\S^k_+$ then $r \geq \frac{1}{k^2} \log d$ where $d$ is the degree of the algebraic boundary of $C^o$. In particular we recover the result of Goemans (Theorem \ref{thm:goemans}) with $k=1$.

There are a couple of questions that we believe it would be interesting to pursue further:
\begin{itemize}
\item \textbf{Polytopes:} One important question is to know whether the lower bound $\rankpsd(C) \geq \sqrt{\log d}$ can be improved in the case where $C$ is a polytope? In particular can the lower bound be improved to $\log d$ in this special case? Recall that if $C$ is a polytope then $d = \deg \partial_a C^o$ is simply its number of vertices.
\item \textbf{Vertices of spectrahedra:} A related question is to know whether the bound of $2^{m^2}$ on the number of vertices of spectrahedral shadows (Theorem \ref{thm:vertices}) is tight? In words, can we find a spectrahedron (or a spectrahedral shadow) that has $2^{\Omega(m^2)}$ vertices? We believe that a natural candidate to try are random spectrahedra of appropriate dimension $n \approx t_{m/2}$. Results in \cite{amelunxen2015intrinsic} can be useful for this question.
\item \textbf{Explicit example:} Thirdly, is there an \emph{explicit} example of a spectrahedron whose polar has an algebraic boundary of degree $2^{\Omega(m^2)}$?
\item \textbf{Analysis of algebraic degree:} Finally we believe it would be useful to have an (asymptotic) analysis of the formulas for the algebraic degrees of semidefinite programming $\delta(n,m,r)$. For this paper we have used elementary manipulations to show that when $n\approx t_{m/2}$ and for certain values of $r$ then $\delta(n,m,r)$ is $2^{\Omega(m^2)}$, but we believe a more complete and systematic study of these quantities can be undertaken. For example we conjecture that in the regime $n \approx t_{m/2}$ the value of $\delta(n,m,r)$ for any $r$ in the Pataki range is $2^{\Omega(m^2)}$. Proving such a result would allow us to simplify the proof of Theorem \ref{thm:tightness} by bypassing the need for Lemma \ref{lem:amelunxen} (it suffices to take any generic spectrahedron of dimension say $n = t_{m/2}+1$ and to observe that \emph{at least one} of the $\Vr(A)$ must belong to $\partial_a S^o$). An analysis of the values of $\delta(n,m,r)$ would also allow us to improve the constants in Theorems \ref{thm:mainlbpsdrank} and \ref{thm:tightness}. For example, where we used the B\'ezout bound in Lemma \ref{lemma:finitenessKKT} one can instead use the quantity $\sum_{r} \delta(n,m,r)$ (where $r$ ranges over the Pataki range) as an upper bound on the number of solutions of the KKT system.
\end{itemize}

\paragraph{Acknowledgments.} We would like to thank Rainer Sinn for
clarifications concerning questions of irreducibility used in Lemma
\ref{lem:irredKKT}, and James Saunderson for comments that helped
improve the paper. We would also like to thank anonymous referees for
their comments on an earlier version of the paper that was submitted
for presentation to the conference MEGA 2017. Finally we would like to
thank Xavier Allamigeon and St\'ephane Gaubert for organizing the
session on Semidefinite Optimization at the PGMO 2016 conference where
this project was started. The second author is supported by the ANR
JCJC GALOP grant.

\appendix

\section{Proof of Lemma \ref{lem:algdegformula}: analysis of the formula for the algebraic degree of semidefinite programming}
\label{app:algdegformula}

In this \modif{subsection} we prove Lemma \ref{lem:algdegformula} which we restate below.

\begin{lemma*}[Restatement of Lemma \ref{lem:algdegformula}]
\modif{Assume $m$ even and large enough and consider $n = t_{m/2}+1$ and $r = m/2+1$. 
Then for generic $A=(A_0,\ldots,A_n) \in (\S^m(\CC))^{n+1}$ the variety $\Vr(A)$ has degree $\geq 2^{m^2 / 20}$.}
\end{lemma*}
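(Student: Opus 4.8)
The plan is to reduce the claim, via the references already quoted in the text, to an elementary estimate on a known closed‑form expression, and then to carry out that estimate with Stirling's approximation. By \cite[Theorem 13]{nie2010algebraic} the variety $\Vr(A)$ is, for generic $A$ and for a pair $(n,r)$ in the Pataki range \eqref{eq:pataki}, a hypersurface whose degree is the algebraic degree of semidefinite programming $\delta(n,m,r)$, and \cite{nie2010algebraic,von2009general} give $\delta(n,m,r)$ in closed form as a sum of products of binomial coefficients (equivalently, a minor of the matrix of binomials). The first step is routine: one checks that $n=t_{m/2}+1$, $r=m/2+1$ lie in the Pataki range for $m$ large, i.e.\ $t_{m/2}+1\ge t_{m-r}=t_{m/2-1}$ (clear, monotonicity of $t$) and $t_r=t_{m/2+1}\sim m^2/8\le t_m-n\sim 3m^2/8$, and that moreover $n>t_{m-r}$, so that the degree formula applies and (by Lemma \ref{lem:irredKKT}) $\Vr(A)$ is irreducible.

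The heart of the argument is a lower bound on $\delta(t_{m/2}+1,m,m/2+1)$. Since $\delta(n,m,r)=\deg\Vr(A)$ is a count of complex points — the number of complex solutions of a generic linear section of the $\KKT_r$ system — every summand in the combinatorial expansion of the $\delta$‑formula is itself a geometric sub‑count, hence nonnegative, and so $\delta$ is bounded below by any single term. The plan is to isolate one such term $T(m)$, a product of $O(m)$ binomial coefficients whose top and bottom entries are all of order $m^2$ (between $t_{m/2}\sim m^2/8$ and $t_m\sim m^2/2$), and to estimate it via Stirling: writing $\log\binom{a}{b}=a\,H(b/a)+O(\log a)$ with $H(p)=-p\log p-(1-p)\log(1-p)$ the binary entropy function, $\log T(m)$ collapses to $c\,m^2+O(m\log m)$ for an explicit constant $c$. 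One then verifies $c>1/20$ with room to spare, which gives $\delta\ge T(m)\ge 2^{m^2/20}$ for $m$ large; the generous constant $20$ is precisely what absorbs the $O(m\log m)$ correction and the rounding of $m^2/20$ to an integer.

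I expect the main obstacle to be the bookkeeping in this last step: selecting which term of the formula to retain so that, after the entropy computation, the leading constant genuinely clears $1/20$, and then pushing through the elementary but tedious arithmetic with $n=t_{m/2}+1$ and $r=m/2+1$ substituted in. A secondary point requiring care is cancellation: if one works with a signed (determinantal) form of the formula rather than a manifestly positive one, the "keep one term" step is not automatically legitimate, so one should either use the positivity of the underlying point count to justify dropping the rest, or instead reduce to a corner of the Pataki range ($n=t_{m/2-1}=t_{m-r}$), where $\delta$ degenerates to the classical degree of the determinantal variety of symmetric $m\times m$ matrices of rank $\le m/2+1$, a Giambelli‑type product of binomials to which the same Stirling estimate applies.
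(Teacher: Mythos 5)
Your proposal follows essentially the same route the paper takes: invoke \cite[Theorem 13]{nie2010algebraic} and the closed form for $\delta(n,m,r)$ from \cite{von2009general}, observe that the summands $\psi_I\psi_{I^c}$ are nonnegative so that one may retain a single term, and then estimate that term. Where you diverge is in how the retained term is estimated and, more importantly, in the step you yourself flag as ``bookkeeping'': you never commit to which $I$ to keep. That choice is not a detail --- a generic $\psi_I$ is a sum of minors of the Pascal matrix with no usable closed form, so it is not ``a product of $O(m)$ binomial coefficients'' to which Stirling applies. The paper makes the choice $I=\{1,\ldots,m/2-2\}\cup\{m\}$ precisely because the complement $I^c=[m/2-1,m-1]$ is an \emph{interval}; for an interval, $\psi_{[p+1,q]}$ coincides with the Pataki-corner degree $\delta(t_{q-p},q,q-p)$ and thus has the Harris--Tu product form \eqref{eq:harristu}, which is what a term-by-term estimate can digest. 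This also corrects the second alternative you sketch: replacing $n=t_{m/2}+1$ by the corner value $n=t_{m-r}=t_{m/2-1}$ changes the statement (the two differ by $m/2+1$), whereas the paper uses the corner-case product formula only as a sub-routine for evaluating $\psi$ on an interval, while keeping $n=t_{m/2}+1$ in the outer sum. Finally, Stirling turns out to be unnecessary: writing the Harris--Tu product in the telescoped form \eqref{eq:Bmrid}, every one of the $t_{m/2-2}\sim m^2/8$ factors is at least $3/2$, giving $\psi_{I^c}\geq(3/2)^{m^2/8}\geq 2^{m^2/20}$ directly. One more small caution: the nonnegativity of each $\psi_I$ (and the inequality $\psi_I\geq 1$ used to drop the factor $\psi_I$) is most cleanly justified by total nonnegativity of the Pascal matrix rather than by the informal ``geometric sub-count'' argument, though the conclusion is the same.
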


For this we rely on the formula for the algebraic degree of semidefinite programming proved in \cite{von2009general}.

Let $\delta(n,m,r)$ be the degree of the variety $\Vr(A)$ where $A$ is a generic pencil $(A_0,\ldots,A_n) \in (\S^m(\RR))^{n+1}$. A formula for $\delta(n,m,r)$ was given in \cite{von2009general} which we describe now. Let $\Psi$ be the (infinite) matrix of binomial coefficients, i.e., $\Psi_{i,j} = \binom{i}{j}$ for $i,j \geq 0$. For $I \subseteq \{1,\dots,m\}$ define $\psi_I$ to be the sum of all the $|I|\times |I|$ minors of $\Psi[I,\cdot]$. For example if $I$ is a singleton we have $\psi_{\{i\}} = 2^{i-1}$.

\begin{theorem}[{\cite{von2009general}, see also \cite{ranestad2012algebraic}}]
For a generic $A=(A_0,\ldots,A_n)$ the algebraic degree of $\Vr(A)$ (see Equation \eqref{eq:Dr*A}) is given by:
\begin{equation}
\label{eq:defdelta}
\delta(n,m,r) = \sum_{\substack{I\subseteq \{1,\dots,m\}\\ |I|=m-r, s(I)=n}} \psi_{I} \psi_{I^c}
\end{equation}
where for $I \subseteq \{1,\dots,m\}$ we denote by $s(I)$ the sum of the elements of $I$, and $I^c = \{1,\dots,m\} \setminus I$.
\end{theorem}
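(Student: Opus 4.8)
The plan is to prove the formula by the route of von Bothmer and Ranestad \cite{von2009general}: realize $\Vr(A)$, for generic $A$, as a generic linear section and projection of the dual variety of a symmetric determinantal variety, rewrite the degree as an intersection number on the associated conormal variety, resolve that conormal variety as a double projective bundle over a Grassmannian, push the intersection number down by the Grothendieck projective-bundle formula, and finally evaluate the resulting integral of Segre classes on the Grassmannian by torus localization; the combinatorics of that last evaluation is exactly what produces the sums of minors of the binomial matrix $\Psi$.

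First I would set up the reduction. Let $\mathcal{R}_r = \{X \in \S^m(\CC) : \rank X \le r\}$, of codimension $t_{m-r}$, and let $\mathrm{Con}(\mathcal{R}_r) = \overline{\{(X,Z) : XZ = 0,\ \rank X = r,\ \rank Z = m-r\}}$, which is exactly the variety $\{XZ=0\}^r$ of \cite[Lemma 6]{nie2010algebraic}: it is irreducible, of dimension $t_m$ in $\S^m(\CC)\times\S^m(\CC)$, equivalently of dimension $t_m - 2$ after projectivizing each factor. Reading off $\KKT_r$ from \eqref{eq:KKTrank0}, a point of $\Vr(A)$ comes from a point of $\mathrm{Con}(\mathcal{R}_r)$ whose $X$-coordinate lies on the $n$-dimensional affine space $A_0 + \linspan(A_1,\dots,A_n)$ and whose $Z$-coordinate lies on the affine space cut out by $\cA^*(Z) = -c$, with the generic line in $c$-space imposing one further linear condition. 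Hence, for generic $A$, $\deg \Vr(A) = \int_{\mathrm{Con}(\mathcal{R}_r)} h_1^{a}\, h_2^{b}$, where $h_1,h_2$ are the hyperplane classes of the two projective factors, $a+b = t_m - 2$, and the split $(a,b)$ is fixed by $n$ through a dimension count. Two facts are needed here: that for generic $A$ this section maps birationally onto $\Vr(A)$, and that the lower-rank boundary strata of $\mathrm{Con}(\mathcal{R}_r)$ contribute nothing; both follow from Kleiman transversality together with the hypothesis $n > t_{m-r}$, which---as noted before Lemma \ref{lem:irredKKT}---excludes the degenerate zero-dimensional determinantal locus whose dual would merely be a union of hyperplanes.

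Next I would resolve $\mathrm{Con}(\mathcal{R}_r)$ over a Grassmannian. The map $(X,Z) \mapsto \mathrm{im}\,X$ presents $\mathrm{Con}(\mathcal{R}_r)$ as birational to the total space of $\mathrm{Sym}^2 \mathcal{S} \oplus \mathrm{Sym}^2 \mathcal{Q}^{\vee}$ over $\Gr(r,\CC^m)$, where $\mathcal{S}$ (rank $r$) and $\mathcal{Q}$ (rank $m-r$) are the tautological sub- and quotient bundles: over a subspace $V$, the matrix $X$ runs over $\mathrm{Sym}^2 V$ and its companion $Z$ over $\mathrm{Sym}^2(\CC^m/V)^{\vee}$. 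Projectivizing each summand, $\mathrm{Con}(\mathcal{R}_r)$ is birational to $\mathbb{P}(\mathrm{Sym}^2 \mathcal{S}) \times_{\Gr} \mathbb{P}(\mathrm{Sym}^2 \mathcal{Q}^{\vee})$, of dimension $r(m-r) + (t_r - 1) + (t_{m-r} - 1) = t_m - 2$ as required, and $h_1,h_2$ pull back to the relative $\mathcal{O}(1)$ classes $\xi_1,\xi_2$ of the two factors. The Grothendieck projective-bundle formula then pushes $\xi_1^{a}\xi_2^{b}$ down to $\Gr(r,m)$ as a product of a Segre class of $\mathrm{Sym}^2 \mathcal{S}$ with a Segre class of $\mathrm{Sym}^2 \mathcal{Q}^{\vee}$, yielding $\delta(n,m,r) = \int_{\Gr(r,m)} s_{a'}(\mathrm{Sym}^2 \mathcal{S})\, s_{b'}(\mathrm{Sym}^2 \mathcal{Q}^{\vee})$ with $a'+b' = r(m-r)$ and $a'$ determined by $n$.

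Finally I would evaluate this integral by equivariant localization for the torus $(\CC^*)^m$ on $\Gr(r,m)$: the fixed points are the coordinate subspaces indexed by subsets $I \subseteq \{1,\dots,m\}$ with $|I| = m-r$ (so $I$ records the $\mathcal{Q}$-weights and $I^c$ the $\mathcal{S}$-weights), the tangent space there is $\mathcal{S}^{\vee}\otimes\mathcal{Q}$ with Euler class the Vandermonde-type product $\prod_{i\in I^c,\,j\in I}(j-i)$, while $\mathrm{Sym}^2 \mathcal{S}$ and $\mathrm{Sym}^2 \mathcal{Q}^{\vee}$ carry weights $\{i+j : i\le j,\ i,j\in I^c\}$ and (up to sign) $\{i+j : i\le j,\ i,j\in I\}$. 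Localization then writes $\delta(n,m,r)$ as a sum over such $I$ of a ratio whose numerator is the product of the degree-$a'$ and degree-$b'$ parts of the two Segre series and whose denominator is that Vandermonde; a bialternant manipulation---clearing the Vandermonde to turn the sum into a Jacobi--Trudi-type determinant whose entries are the binomial coefficients $\Psi_{i,j} = \binom{i}{j}$---collapses the $I$-summand to $\psi_I\,\psi_{I^c}$, and the degree bookkeeping relating $a'$ to $n$ leaves only the fixed points with total weight $s(I) = n$, which is precisely \eqref{eq:defdelta}. Since every genericity condition used above cuts out a Zariski-dense set defined over $\QQ$, it suffices to take $A$ generic in $(\S^m(\RR))^{n+1}$, as in the statement. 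I expect the main obstacle to be this last step: proving that the localization contribution at each $I$ factors cleanly as $\psi_I\psi_{I^c}$---that is, that the Segre series of $\mathrm{Sym}^2$ of a sum of line bundles, weighted by the inverse Vandermonde, reproduces exactly the sums of minors of the binomial matrix---and that the bookkeeping pins down exactly the index set $\{|I| = m-r,\ s(I) = n\}$. A subsidiary, but still delicate, point is the use of Kleiman/Bertini transversality, together with the Pataki bounds \eqref{eq:pataki} and $n > t_{m-r}$, to justify the birationality in the first step and the vanishing of the boundary contributions.
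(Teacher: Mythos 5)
This statement is not proved in the paper at all: it is imported verbatim from \cite{von2009general} (see also \cite{ranestad2012algebraic}) and used as a black box in Appendix A, so there is no ``paper proof'' to compare against. Judged on its own terms, your sketch does reproduce the correct architecture of the argument in the cited reference --- identify $\delta(n,m,r)$ with a bidegree of the conormal-type variety $\{XZ=0\}^r$, resolve that variety as a fiber product of projective bundles $\mathbb{P}(\mathrm{Sym}^2\mathcal{S})\times_{\Gr}\mathbb{P}(\mathrm{Sym}^2\mathcal{Q}^{\vee})$ over $\Gr(r,m)$ (your dimension count $r(m-r)+t_r+t_{m-r}=t_m$ checks out), and push down to an integral of Segre classes of $\mathrm{Sym}^2$ of the tautological bundles.

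The genuine gap is the last step, which is precisely where all the content of the theorem lives. Your proposed mechanism --- torus localization in which each fixed point $I$ contributes exactly $\psi_I\psi_{I^c}$ and the fixed points with $s(I)\neq n$ drop out --- is not how such a computation can go. Under localization \emph{every} fixed point of $(\CC^*)^m$ on $\Gr(r,m)$ contributes a rational expression with the Vandermonde product $\prod_{i\in I^c, j\in I}(j-i)$ in the denominator; these contributions do not individually factor as integers $\psi_I\psi_{I^c}$, and no fixed point vanishes for reasons of ``degree bookkeeping.'' The restriction to $s(I)=n$ and the appearance of the sums of minors $\psi_I$ arise only after one expands $s_{a'}(\mathrm{Sym}^2\mathcal{S})$ and $s_{b'}(\mathrm{Sym}^2\mathcal{Q}^{\vee})$ in the Schur basis of $H^*(\Gr(r,m))$ and pairs complementary Schubert classes: the constraint $s(I)=n$ is the statement that the two partitions are Poincar\'e dual, and the identification of the Schur coefficients of Segre classes of $\mathrm{Sym}^2$ of a bundle with the quantities $\psi_I$ is a nontrivial determinantal identity (of J\'ozefiak--Lascoux--Pragacz/Harris--Tu type) that you would have to prove, not an output of localization bookkeeping. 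You flag this step as ``the main obstacle,'' which is honest, but as written the proposal asserts the theorem's key identity rather than proving it; the subsidiary transversality and birationality claims in your first step are also only gestured at, though those are more routine.
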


The main purpose of this Appendix is to prove the following lower bound on $\delta(n,m,r)$ in a special regime.

\begin{lemma}
\label{lem:lbalgdeg}
For all large enough even $m$, $n=t_{m/2}+1$ and $r=m/2+1$ we have $\delta(n,m,r) \geq 2^{m^2/20}$.
\end{lemma}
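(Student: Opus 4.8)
The plan is to prove the equivalent combinatorial statement, Lemma~\ref{lem:lbalgdeg}: $\delta(t_{m/2}+1,\,m,\,m/2+1)\ge 2^{m^2/20}$ for all large even $m$ (combined with $\delta(n,m,r)=\deg\Vr(A)$ for generic $A$, this gives Lemma~\ref{lem:algdegformula}). The key structural fact is that Pascal's matrix $\Psi$ is \emph{totally nonnegative}, so every minor of every row-submatrix $\Psi[J,\cdot]$ is a nonnegative integer; thus each $\psi_I\ge 0$, and moreover $\psi_I\ge 1$ for nonempty $I$ since $\Psi[I,\cdot]$ contains a maximal submatrix that is lower triangular with unit diagonal. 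Consequently every summand in \eqref{eq:defdelta} is nonnegative, so it suffices to exhibit one admissible index set and bound its term from below. With $m-r=m/2-1$ and $n=t_{m/2}+1$, the set $I:=\{1,\dots,m/2-2\}\cup\{m\}$ has $|I|=m/2-1$ and $s(I)=t_{m/2-2}+m=t_{m/2}+1=n$, and its complement $I^c=\{m/2-1,\dots,m-1\}$ is a block of $m/2+1$ consecutive integers; using $\psi_I\ge 1$ it remains only to show $\psi_{I^c}\ge 2^{m^2/20}$.

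Because $I^c$ is a consecutive block, the rows of $\Psi[I^c,\cdot]$ are $m/2+1$ consecutive Pascal rows $\binom{a+i-1}{\cdot}$, $i=1,\dots,m/2+1$, with $a=\Theta(m)$ (concretely $a=m/2-2$). I would lower-bound $\psi_{I^c}$ by a single well-chosen maximal minor: the one on the consecutive block of columns $\{b,b+1,\dots,b+m/2\}$ for a centre $b$ with $b\approx a/2\approx m/4$ (so $0\le b\le a$). The corresponding submatrix is $\bigl(\binom{a+i-1}{\,b+j-1\,}\bigr)_{i,j=1}^{m/2+1}$, whose determinant has the classical closed form for binomial determinants,
\[
\det\!\left(\binom{a+i-1}{b+j-1}\right)_{i,j=1}^{k}=\prod_{i=1}^{k}\frac{(a+i-1)!\,(i-1)!}{(b+i-1)!\,(a-b+i-1)!},
\]
which is elementary (row/column reduction, or Lindström--Gessel--Viennot) and is valid, and strictly positive, whenever $0\le b\le a$ — it continues to hold in the presence of the zero entries produced when $b+j-1$ exceeds $a+i-1$.

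It remains to estimate this product. Each factor has the shape $\tfrac{N!\,M!}{P!\,Q!}$ with $N+M=P+Q$ and $\{P,Q\}$ closer to balanced than $\{N,M\}$, hence is at least $1$ by log-convexity of the factorial; the point is that for $b\approx a/2$ a constant fraction of the factors are as large as $2^{\Omega(m)}$ — for example the $i=1$ factor is $\binom{a}{b}\approx 2^{m/2}$. Concretely, by Stirling the $i$-th factor equals $2^{\,S_i(1-H_2(N_i/S_i))+O(\log m)}$ with $N_i=a+i-1$ and $S_i=N_i+(i-1)$, and summing over $i$ gives $\log_2\psi_{I^c}\ge c\,m^2$ for an explicit constant $c$ bounded well away from $0$ (the centred choice yields $c\approx 0.1$), so in particular $c>1/20$. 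Hence $\psi_{I^c}\ge 2^{m^2/20}$ for all large even $m$, and therefore $\delta(t_{m/2}+1,m,m/2+1)\ge 2^{m^2/20}$, as claimed. The only genuine work here is the last Stirling estimate — checking that the $O(m)$ contributions are harmless and that the surviving $m^2$-coefficient clears $1/20$ — together with the routine bookkeeping of parities, floors, and of the inequality $0\le b\le a$ needed to invoke the determinant formula; all of these carry comfortable slack.
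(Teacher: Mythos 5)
Your proposal is correct and reaches the same bound, but by a genuinely different route from the paper. Both proofs start identically: you pick exactly the same index set $I=\{1,\dots,m/2-2\}\cup\{m\}$, verify $|I|=m-r$ and $s(I)=n$, drop the factor $\psi_I\ge 1$, and reduce to showing $\psi_{I^c}\ge 2^{m^2/20}$ for the block of consecutive rows $I^c$. Where you diverge is in how $\psi_{I^c}$ is bounded. The paper evaluates $\psi_{I^c}$ \emph{exactly}: it identifies $\psi_{[m-r+1,m]}=\delta(t_{m-r},m,r)$ with the Nie--Ranestad--Sturmfels degree, invokes the Harris--Tu closed form \eqref{eq:harristu}, massages it to the symmetric product \eqref{eq:Bmrid}, and then bounds each of the $t_{m-r}$ factors below by $1+\frac{r}{2(m-r)-1}\ge 3/2$ — a purely rational, one-line termwise estimate with no asymptotics at all. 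You instead lower-bound $\psi_{I^c}$ by a \emph{single} maximal minor on a centred block of consecutive columns, evaluate that minor by the classical product formula for binomial determinants, and then need Stirling/entropy estimates to extract the $2^{\Omega(m^2)}$ rate. The trade is: you avoid citing the Harris--Tu/NRS formula (your determinant identity is a classical LGV-type fact), at the cost of an analytic estimate that you only sketch. The estimate you sketch does go through — with $a\approx m/2$, $b\approx a/2$ the exponent is $\approx \frac{m^2}{4}\int_0^1(1+2u)\bigl(1-H_2(\tfrac{1+u}{1+2u})\bigr)\,du\approx 0.11\,m^2$, comfortably above $m^2/20$ — but to make the proof complete you would need to carry it out rather than assert "comfortable slack," whereas the paper's termwise $3/2$ bound is immediate. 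One small omission both proofs share, which you at least flag: $\psi_I\ge 1$ for nonempty $I$ requires an argument (e.g., the minor on columns matching $I$ is triangular with unit diagonal, plus total nonnegativity of Pascal's matrix so no cancellation occurs in the sum); the paper uses this silently.
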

The bounds we give in this appendix are very crude and are not meant to be optimal. We actually conjecture that in the regime $n \approx t_{m/2}$, we have $\delta(t_{m/2},m,r) \geq 2^{\Omega(m^2)}$ for any $r$ in the Pataki range \eqref{eq:pataki}.

In order to prove our result we will first analyze the value of $\psi$ on intervals. We will show
\begin{lemma}
\label{lem:psi-intervals}
For any integers $p \leq q$ we have $\psi_{[p+1,q]} \geq (1+\frac{q-p}{2p-1})^{t_p}$.
\end{lemma}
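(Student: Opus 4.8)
The plan is to establish the exact product formula
\[
\psi_{[p+1,q]} \;=\; \prod_{1\le i\le j\le p}\frac{(q-p)+i+j-1}{i+j-1}
\]
and then to observe that each of its $t_p$ factors is at least $1+\frac{q-p}{2p-1}$. Write $k:=q-p$ (so $[p+1,q]$ has $k$ elements), write $s_\lambda(1^p)$ for the value of the Schur polynomial $s_\lambda$ at $p$ ones, and let $(k^p)$ denote the partition with $p$ parts all equal to $k$. For the formula I would proceed as follows. The rows of the Pascal matrix indexed by $[p+1,q]$ have generating functions $(1+t)^{p},\dots,(1+t)^{q-1}$, and by the standard identity expressing the sum of all $k\times k$ minors of such a matrix in terms of $\det\big((1+x_j)^{p+l-1}\big)_{l,j=1}^k$, the key point is that this determinant factors as $\big(\prod_{j=1}^k(1+x_j)^{p}\big)\prod_{j<j'}(x_{j'}-x_j)$. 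After cancelling the Vandermonde common to both sides one is left with the expansion of $\prod_{j=1}^k(1+x_j)^{p}$ in Schur polynomials of $x_1,\dots,x_k$; comparing this with the dual Cauchy identity $\prod_{j=1}^k(1+x_j)^p=\sum_{\lambda\subseteq(p^k)}s_\lambda(x_1,\dots,x_k)\,s_{\lambda'}(1^p)$ identifies each maximal minor with a value $s_{\lambda'}(1^p)$, and summing over all column subsets gives $\psi_{[p+1,q]}=\sum_{\mu\subseteq(k^p)}s_\mu(1^p)$. (The same identity can alternatively be extracted from the analysis of the KKT coefficient matrices in \cite{nie2010algebraic,von2009general}.)

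Next I would evaluate $\sum_{\mu\subseteq(k^p)}s_\mu(1^p)$. By Littlewood's identity $\sum_\mu s_\mu=\prod_i(1-x_i)^{-1}\prod_{i<j}(1-x_ix_j)^{-1}$ this sum is exactly the number of symmetric plane partitions fitting inside a $p\times p\times k$ box, and by the classical formula for symmetric plane partitions (conjectured by MacMahon, proved by Andrews and by Macdonald) that number equals $\prod_{1\le i\le j\le p}\frac{k+i+j-1}{i+j-1}$. This is the claimed product formula, and the product ranges over $\#\{(i,j):1\le i\le j\le p\}=\binom{p+1}{2}=t_p$ pairs. To conclude, observe that for every such pair $i+j-1\le 2p-1$, hence $\frac{k+i+j-1}{i+j-1}=1+\frac{k}{i+j-1}\ge 1+\frac{k}{2p-1}=1+\frac{q-p}{2p-1}$; multiplying the $t_p$ factors yields $\psi_{[p+1,q]}\ge\big(1+\frac{q-p}{2p-1}\big)^{t_p}$.

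The reduction and the final estimate are routine; the step I expect to be the main obstacle is the exact evaluation of the Schur-polynomial sum. It is tempting to avoid it by keeping only a sub-collection of the minors in $\sum_c\det(\cdot)$, all of which are nonnegative by the Lindstr\"{o}m--Gessel--Viennot lemma (they are minors of a matrix of lattice-path counts); however, the sub-families that can be summed in closed form (rectangular shapes, staircase-translated shapes, and the like) produce quantities whose degree in $q$ is strictly less than $t_p$, so they are too weak, and recovering the full degree-$t_p$ growth seems to require essentially the whole box-enumeration identity. A self-contained alternative would be to prove the product formula by induction on $p$ via the branching rule $s_\mu(1^p)=\sum_{\nu\prec\mu}s_\nu(1^{p-1})$, but pushing that recursion through to the closed form is itself a nontrivial computation.
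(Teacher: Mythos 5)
Your proof is correct, and it lands on exactly the same closed-form product
\[
\psi_{[p+1,q]} \;=\; \prod_{1\le i\le j\le p}\frac{(q-p)+i+j-1}{i+j-1},
\]
followed by the same termwise estimate (each factor is $\geq 1+\tfrac{q-p}{2p-1}$ and there are $t_p$ of them), but you reach that product by a genuinely different route. The paper's derivation stays inside the algebraic-degree framework it has already set up: it specializes the von~Bothmer--Ranestad formula $\delta(n,m,r)=\sum_{I}\psi_I\psi_{I^c}$ to $n=t_{m-r}$, where the sum collapses to a single term so that $\psi_{[m-r+1,m]}=\delta(t_{m-r},m,r)$, then invokes \cite[Corollary~15]{nie2010algebraic} (which rests on the Harris--Tu formula for degrees of symmetric degeneracy loci) to get a single-index product, and finally rewrites it by elementary factorial manipulations into the double product. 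Your derivation is instead purely symmetric-function-theoretic: expand the sum of maximal minors of the Pascal submatrix via a Vandermonde factorization and the dual Cauchy identity to get $\psi_{[p+1,q]}=\sum_{\mu\subseteq((q-p)^p)}s_\mu(1^p)$, identify this with the count of symmetric plane partitions in a $p\times p\times(q-p)$ box, and then quote the MacMahon--Andrews--Macdonald box formula (whose $q\to1$ limit is exactly the stated double product). Both proofs outsource the heavy lifting to a deep classical theorem (Harris--Tu on one side, the symmetric box formula on the other); yours has the merit of evaluating $\psi$ on an interval directly, with a transparent combinatorial meaning, while the paper's avoids introducing new machinery. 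Your closing worry --- that restricting to a tractable sub-family of minors would lose a power of $q$ --- is well founded but moot here, since you do supply the full evaluation; your Schur-sum identification and the box formula together are a complete and correct replacement for the paper's Harris--Tu step.
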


Before proving Lemma \ref{lem:psi-intervals}, we first see how to use it to prove Lemma \ref{lem:lbalgdeg}.
\begin{proof}[Proof of Lemma \ref{lem:lbalgdeg}]
Consider $I = \{1,\ldots,m/2-2\} \cup \{m\}$. Then $|I| = m/2-1 = m-r$ and $s(I) = t_{m/2}+1 = n$. Thus $\delta(n,m,r) \geq \psi_{I} \psi_{I^c} \geq \psi_{I^c} = \psi_{[m/2-2,m-1]}$. Using Lemma \ref{lem:psi-intervals} we get
\[
\psi_{[m/2-2,m-1]} \geq \left(1 + \frac{m/2+1}{m-3}\right)^{t_{m/2-2}}.
\]
We now use the fact that $1 + \frac{m/2+1}{m-3} \geq 1+1/2 = 3/2$ and $t_{m/2-2} \geq m^2 / 9$ for large enough $m$ to get $\psi_{[m/2-1,m-2]} \geq 2^{(\log_2(3/2)/9) m^2} \geq 2^{m^2/20}$.
\end{proof}

It thus remains to prove Lemma \ref{lem:psi-intervals}. We can get the value of $\psi$ on intervals by considering the case $n=t_{m-r}$ in \eqref{eq:defdelta}. Indeed in this case there is only one set $I$ that satisfies the constraints of the summation \eqref{eq:defdelta} which is $I = \{1,\ldots,m-r\}$. Since $\psi_{[1,m-r]} = 1$ it follows that $\delta(t_{m-r},m,r) = \psi_{[m-r+1,m]}$. On the other hand a simpler formula for $\delta(t_{m-r},m,r)$ was provided in \cite[Corollary 15]{nie2010algebraic}, based on a result by Harris and Tu \cite{harris1984symmetric}. This tells us that
\begin{equation}
\label{eq:harristu}
\delta(t_{m-r},m,r) = \psi_{[m-r+1,m]} = \prod_{i=0}^{m-r-1}\frac{\binom{m+i}{m-r-i}}{\binom{2i+1}{i}}.
\end{equation}
The formula on the right-hand side can be simplified further using simple manipulations to get 
\begin{equation}
\label{eq:Bmrid}
\psi_{[m-r+1,m]} = \prod_{0 \leq i \leq j \leq m-r-1} \frac{r+i+j+1}{i+j+1}.
\end{equation}
To see why this holds, first use the definition of binomial coefficient $\binom{n}{k} = \frac{n\dots (n-k+1)}{k!}$ to get
\begin{equation}
\label{eq:B2rrid1}
\psi_{[m-r+1,m]} = \prod_{i=0}^{m-r-1} \frac{\binom{m+i}{m-r-i}}{\binom{2i+1}{i}} = \prod_{i=0}^{r-1} \frac{(m+i) \dots (r+2i+1)}{(m-r-i)!} \cdot \frac{i!}{(2i+1)\dots (i+2)}
\end{equation}
Separating the terms in \eqref{eq:B2rrid1} we get
\begin{equation}
\label{eq:B2rrid2}
\psi_{[m-r+1,m]} = \left[\prod_{0\leq i \leq j \leq m-r-1} (r+i+j+1)\right] \cdot \left[\prod_{i=0}^{m-r-1} \frac{i!}{(m-r-i)!(2i+1)\dots (i+2)}\right].
\end{equation}
Noting that $\prod_{i=0}^{m-r-1} i! / (m-r-i)! = \frac{1}{(m-r)!} = \prod_{i=0}^{m-r-1} \frac{1}{(i+1)}$ we see that the second factor in \eqref{eq:B2rrid2} is equal to
\begin{equation}
\label{eq:B2rrid3}
\prod_{i=0}^{m-r-1} \frac{1}{(2i+1) \dots (i+2)(i+1)} = \prod_{i=0}^{m-r-1} \prod_{j=0}^{i} \frac{1}{i+j+1}.
\end{equation}
By doing an appropriate change of variables and plugging this back in \eqref{eq:B2rrid2} we get \eqref{eq:Bmrid}.

Now to prove the bound of Lemma \ref{lem:psi-intervals} note that each term in the product \eqref{eq:Bmrid} is at least $1 + \frac{r}{2(m-r)-1}$ and that there are $t_{m-r}$ terms in the product. The statement of Lemma \ref{lem:psi-intervals} corresponds to $p=m-r$ and $q=m$. This completes the proof.

\section{Proof of Lemma \ref{lem:amelunxen}: occurrence of each value of rank in the Pataki range}
\label{app:amelunxen}

In this Appendix we prove Lemma \ref{lem:amelunxen} which we restate here for convenience.
\begin{lemma*}[Restatement of Lemma \ref{lem:amelunxen}]
Let $m$ and $1 \leq n \leq t_m$ be fixed. Let $r$ in the associated Pataki range \eqref{eq:pataki} with the additional constraint $n > t_{m-r}$. Let $\Gamma$ be any Zariski open set in $(\S^m(\CC))^{n+1}$. Then there exists a pencil $A=(A_0,\ldots,A_n) \in \Gamma \cap (\S^m(\RR))^{n+1}$ such that the variety $\Vr(A)$ is contained in $\partial_a S^o$, where $S = \{x \in \RR^n : A_0 + x_1 A_1 + \dots + x_n A_n \psd 0\}$.
\end{lemma*}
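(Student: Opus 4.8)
The plan is to extract the desired pencil from the probabilistic analysis of random semidefinite programs in \cite{amelunxen2015intrinsic}. The key geometric fact we need is the following: a point $c$ on the boundary of $S^o$ lies on the component $\Vr(A)$ of $\partial_a S^o$ precisely when the optimal face of $S$ in direction $c$ has a relative interior point $x^\star$ at which the slack matrix $X = A_0 + \cA(x^\star)$ has rank exactly $r$ and the dual optimal multiplier $Z$ has rank exactly $m-r$, with strict complementarity holding. So it suffices to produce \emph{one} real pencil $A \in \Gamma$ for which a \emph{full-dimensional} set of objective directions $c$ realizes this rank pair $(r,m-r)$ at the optimum; then $\partial S^o$ has nonempty relative interior intersection with $\Vr(A)$, and since $\Vr(A)$ is irreducible (Lemma \ref{lem:irredKKT}, using $n > t_{m-r}$) and has the same dimension as $\partial_a S^o$ (both are hypersurfaces), we conclude $\Vr(A) \subseteq \partial_a S^o$.

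First I would recall the setup of \cite{amelunxen2015intrinsic}: for a random spectrahedron (say with $A_1,\dots,A_n$ Gaussian and $A_0$ chosen so that $0 \in \interior S$) and a random objective direction $c$, the rank $r$ of the optimal slack matrix is a random variable supported on the Pataki range, and Amelunxen--B\"urgisser show each admissible value occurs with positive probability. In particular, fixing the value $r$ in our Pataki range (with the extra hypothesis $n > t_{m-r}$), there is a positive-measure set of pairs $(A, c)$ — and hence, by Fubini, a positive-measure set of pencils $A$ — such that for a positive-measure set of directions $c$ the optimal slack has rank $r$ and the dual optimal solution has rank $m-r$ with strict complementarity (the last being a generic phenomenon). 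Second, I would intersect this positive-measure set of good pencils with $\Gamma \cap (\S^m(\RR))^{n+1}$: since $\Gamma$ is Zariski open and nonempty, its complement is a proper subvariety and has Lebesgue measure zero in $(\S^m(\RR))^{n+1}$, so the intersection is still of positive measure, hence nonempty. Pick any real pencil $A$ in it. Third, for this $A$, the set of good directions $c$ is full-dimensional, and each such $c$ lies on $\partial S^o$ (since it is an optimal objective direction at a boundary-defining point of $S$) and, via the KKT$_r$ equations, on $\cV_r(A) \cap \RR^n$. Therefore $\dim(\cV_r(A) \cap \partial S^o) = n-1 = \dim \cV_r(A)$, and irreducibility of $\cV_r(A)$ forces $\cV_r(A) \subseteq \partial_a S^o$.

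The main obstacle is the precise citation and translation of the Amelunxen--B\"urgisser results: I need to be careful that their ``positive probability'' statement refers to exactly the rank of the \emph{primal} optimal slack (and symmetrically the dual), that it is stated for a model of randomness compatible with ours (Gaussian pencil with a fixed positive definite $A_0$, or one that can be rescaled to such), and that strict complementarity holds on the event in question — this last point may require invoking a separate genericity argument (for generic $A$ and generic $c$ in the relevant stratum, primal and dual nondegeneracy hold, so the ranks sum to $m$). A secondary subtlety is ensuring the ``good'' set of pencils genuinely has positive Lebesgue measure rather than merely being nonempty, so that intersecting with the measure-zero complement of $\Gamma$ is harmless; if \cite{amelunxen2015intrinsic} only gives nonemptiness on a set that is a priori not obviously of positive measure, I would instead argue that the event is semialgebraic and its being nonempty plus invariance/openness considerations force positive measure. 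Everything after that is the soft dimension-theoretic argument using Lemma \ref{lem:irredKKT}, which is routine.
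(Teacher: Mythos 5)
Your proposal follows essentially the same route as the paper: invoke the Amelunxen--B\"urgisser positive-probability result for each Pataki rank, pass by a Fubini-type argument to a positive-measure set of pencils $A$ whose conditional set of good directions $c$ has positive measure, intersect with the Zariski-open $\Gamma$ (whose complement is Lebesgue-null), and conclude via irreducibility of $\Vr(A)$ from Lemma \ref{lem:irredKKT} together with a dimension count. One small imprecision: the good directions $c$ form a full-dimensional cone and do not themselves lie on $\partial S^o$, so one should intersect that cone with $\partial S^o$ (equivalently rescale $c$ so that $\max_{x\in S} c^Tx = 1$) to get the $(n-1)$-dimensional semialgebraic subset of $\Vr(A)\cap\partial S^o$, exactly as the paper does with $U = \tilde U \cap \partial S^o$; also, invoking strict complementarity is superfluous, since $XZ=0$ with $X,Z\psd 0$ and $\rank X = r$ already forces $\rank Z \leq m-r$.
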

\begin{proof}
For convenience in this proof we let, for $A=(A_0,\ldots,A_n) \in (\S^m(\RR))^{n+1}$, $S(A) \subset \RR^n$ denote the associated spectrahedron:
\[
S(A) = \{x \in \RR^n : A_0 + x_1 A_1 + \dots + x_n A_n \psd 0\}.
\]

In the paper \cite[Remark 4.1]{amelunxen2015intrinsic} it is shown that for any $r$ satisfying the Pataki bounds \eqref{eq:pataki} we have
\begin{equation}
\label{eq:patakipositive}
\Pr_{A_0,\ldots,A_n,c} \left[ \rank\left(\argmax_{x \in S(A)} c^T x\right) = r \right] > 0
\end{equation}
where $A_0,\ldots,A_n,c$ are standard Gaussian with respect to the Euclidean inner product. In other words, each value in the Pataki range occurs with positive probability.
Fix $r$ in the Pataki range satisfying $n > t_{m-r}$ and consider
\[
\Omega_r = 
\left\{ (A_0,\ldots,A_n) \in (\S^m(\RR))^{n+1} :  \Pr_{c}\left[\rank\left(\argmax_{x \in S(A)} c^T x\right) = r\right] > 0\right\}.
\]
By \eqref{eq:patakipositive} we know that $\Omega_r$ has positive probability (otherwise the complement of $\Omega_r$ has probability 1 which would contradict \eqref{eq:patakipositive}). Thus this means that $\Omega_r$ must meet $\Gamma$ since $\Gamma$ is Zariski open.

Let $A:=(A_0,\ldots,A_n) \in \Omega_r \cap \Gamma$ and let $S = S(A) = \{x \in \RR^n : A_0 + x_1 A_1 + \dots + x_n A_n \psd 0\}$. To prove our claim we will show that $\Vr(A)$ intersects the boundary $\partial S^o$ along a semialgebraic set of dimension $n-1$. This will prove our claim because if we let $U$ be this semialgebraic set we then have on the one hand $\partial_a S^o \supseteq \bar{U}^Z$ (where $\bar{U}^{Z}$ denotes the Zariski closure) and on the other hand $\bar{U}^{Z} = \Vr(A)$, the latter following from the fact that $\Vr(A)$ is irreducible of dimension $n-1$ and that $\dim_{\CC}(\bar{U}^{Z}) = n-1$ since $U$ is a semialgebraic set of dimension $n-1$, see \cite[Proposition 2.8.2]{bochnak2013real}.

It remains to show that $\Vr(A)$ intersects $\partial S^o$ along a semialgebraic set of dimension $n-1$. To see why this holds let
\[
U = \tilde{U} \cap \partial S^o \quad \text{ where } \quad 
\tilde{U} = \left\{c \in \RR^n : \rank\left(\argmax_{x \in S} c^T x\right) = r \right\}.
\]
By definition of $\Vr(A)$ (recall that $\Vr(A)$ is defined in terms of rank-constrained KKT equations) we have $U \subseteq \Vr(A) \cap \partial S^o$. Now observe that $U$ is a semialgebraic set of dimension $n-1$: indeed note that $\tilde{U}$ has nonempty interior (since it is a semialgebraic set with positive probability, see Lemma \ref{lem:semialgebraicprob}) and so $U = \tilde{U} \cap \partial S^o$ has dimension $n-1$ since for any $\alpha \in \partial S^o$ and neighborhood $A$ of $\alpha$, $\dim(A \cap \partial S^o) = n-1$ (because $\partial S^o$ is the boundary of a full-dimensional convex set).
This completes the proof.
\end{proof}

\begin{lemma}
\label{lem:semialgebraicprob}
If $W \subseteq \RR^N$ is semialgebraic and $W$ has positive probability under the standard Gaussian measure, then $W$ has nonempty interior.
\end{lemma}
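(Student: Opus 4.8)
The plan is to prove the contrapositive: if the semialgebraic set $W \subseteq \RR^N$ has empty interior, then it has probability zero. Since the standard Gaussian measure on $\RR^N$ has the density $(2\pi)^{-N/2} e^{-\|x\|^2/2}$ with respect to Lebesgue measure, it is absolutely continuous with respect to Lebesgue measure, so it suffices to show that $W$ has Lebesgue measure zero.

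To see this I would appeal to the structure theory of semialgebraic sets. By the cell decomposition theorem (see e.g. \cite[Theorem 2.3.6]{bochnak2013real}), $W$ can be written as a finite disjoint union $W = \bigcup_{i=1}^k C_i$ of semialgebraic cells, where each $C_i$ is semialgebraically homeomorphic to an open cube $(0,1)^{d_i}$ with $d_i = \dim C_i \in \{0,1,\dots,N\}$. If some $C_i$ had dimension $d_i = N$ then $C_i$ would be an open subset of $\RR^N$ (an $N$-dimensional cell of the decomposition is open), contradicting that $W$ has empty interior; equivalently, $\dim W = \max_i d_i \leq N-1$, because a semialgebraic set of dimension $N$ in $\RR^N$ has nonempty interior \cite[Section 2.8]{bochnak2013real}. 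Hence every $C_i$ has dimension at most $N-1$, and a semialgebraic set of dimension $<N$ has Lebesgue measure zero (it is contained in a finite union of $C^1$ submanifolds of dimension $<N$). A finite union of Lebesgue-null sets is Lebesgue-null, so $W$ has Lebesgue measure zero, and therefore $W$ has Gaussian probability zero. Taking the contrapositive gives the claim.

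There is no genuine obstacle here; the content is entirely classical, and the only care needed is to invoke the correct structural facts — the cell decomposition of semialgebraic sets and the implication ``semialgebraic dimension $<N$ $\Rightarrow$ Lebesgue measure zero'' — and then to observe that the standard Gaussian measure is dominated by Lebesgue measure.
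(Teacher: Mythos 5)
Your proof is correct and rests on the same structural fact the paper uses, namely the cell decomposition of semialgebraic sets from \cite[Theorem 2.3.6]{bochnak2013real}; you argue the contrapositive and make explicit the measure-zero claim for lower-dimensional cells, whereas the paper argues directly that positive probability forces a full-dimensional cell, but the underlying mathematics is identical.
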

\begin{proof}
Any semialgebraic set can be decomposed as a disjoint union of semialgebraic sets that are homeomorphic to $(0,1)^d$ (see \cite[Theorem 2.3.6]{bochnak2013real}). Since $\Pr[W] > 0$, $W$ must have a component that is homeomorphic to $(0,1)^N$ and thus $W$ has nonempty interior.
\end{proof}

\bibliographystyle{alpha}
\bibliography{vertices_spectrahedra}

\end{document}